\documentclass[12pt,a4paper]{article}
\usepackage{graphicx}
\usepackage{amsmath,amssymb,amsthm}
\usepackage{paralist}
\usepackage{float}
\usepackage{overpic}
\usepackage{pdflscape}
\usepackage{tikz}

\usepackage[
    backend=biber,
    style=numeric,
    sorting=none,
    url=false,
    clearlang=true,
    isbn = false,
]{biblatex}
\AtEveryBibitem{\clearfield{pagetotal}}					% removes pagetotal from bibliography
\AtEveryBibitem{\clearfield{note}}						% removes notes from bibliography
\DeclareRedundantLanguages{English,english,german,french, eng}{english,german,ngerman,french}
\renewbibmacro{in:}{%
	\ifentrytype{article}{}{\printtext{\bibstring{in}\intitlepunct}}}		% remove In. in front of Journal bei @article 

\usepackage[hidelinks]{hyperref}
\usepackage[nameinlink]{cleveref}

\newtheorem{theorem}{Theorem}[section]
\theoremstyle{definition}
\newtheorem{definition}[theorem]{Definition}
\theoremstyle{remark}
\newtheorem{remark}[theorem]{Remark}

\newcommand{\R}{\mathbb{R}}

\newcommand{\Z}{\mathbb{Z}}

\newcommand{\G}{\mathbf{G}}
\newcommand{\Net}{\mathbf{N}}
\newcommand{\GG}{\mathbf{\Gamma}}

\newcommand{\X}{\mathbb{X}}
\newcommand{\XX}{\mathbf{X}}

\begin{document}

%%%
\begin{center}
{\large{\bf 
Design of Hierarchical Excitable Networks}}\\
\mbox{} \\
\begin{tabular}{cc}
{\bf Sören von der Gracht$^{\dagger}$} & {\bf Alexander Lohse$^{\ddagger}$} \\
{\small soeren.von.der.gracht@uni-paderborn.de} & {\small alexander.lohse@uni-hamburg.de}
\end{tabular}

\end{center}

%\noindent $^{*}$ Corresponding author.

\noindent $^{\dagger}$ Institute of Mathematics, Paderborn University, Warburger Str.~100, 33098 Paderborn, Germany. \\ ORCiD: 0000-0002-8054-2058

\noindent $^{\ddagger}$ Department of Mathematics, University of Hamburg, Bundesstra{\ss}e~55, 20146 Hamburg, Germany \\ ORCiD: 0000-0002-9834-181X

\begin{abstract}
We provide a method to systematically construct vector fields for which the dynamics display transitions corresponding to a desired hierarchical connection structure. This structure is given as a finite set of directed graphs $\G_1,\dotsc,\G_N$ (the lower level), together with another digraph $\GG$ on $N$ vertices (the top level). The dynamic realizations of $\G_1,\dotsc,\G_N$ are heteroclinic networks and they can be thought of as individual connection patterns on a given set of states. Edges in $\GG$ correspond to transitions between these different patterns. In our construction, the connections given through $\GG$ are not heteroclinic, but excitable with zero threshold. This describes a dynamical transition between two invariant sets where every $\delta$-neighborhood of the first set contains an initial condition with $\omega$-limit in the second set. Thus, we prove a theorem that allows the systematic creation of hierarchical networks that are excitable on the top level, and heteroclinic on the lower level. Our results modify and extend the simplex realization method by Ashwin \& Postlethwaite.
\medskip
\end{abstract}

\noindent {\em Keywords:} heteroclinic network, heteroclinic cycle, excitable network, directed graph, hierarchical network

\vspace{.3cm}

\noindent {\em AMS classification:} 34C37, 37C80, 37C75
\vspace{3cm}

%%%%
\section{Introduction}
Heteroclinic cycles and networks are invariant sets that are associated with intermittent behavior in a dynamical system, where typical trajectories recurrently spend long periods of time near different saddle states. These structures can display intricate stability configurations and subtly influence the overall dynamics of the system. As such they have been studied for several decades, see~\cite{Krupa1997, Weinberger.2018} for two review papers. An excitable connection is another type of dynamical transition that produces similar behavior, and differs from a heteroclinic connection only in backwards time.

From a modeling perspective heteroclinic/excitable structures are adequate mathematical objects for producing intermittent system behavior corresponding to a fixed connection pattern. Examples include neuroscience~\cite{Ortmanns2023}, game theory~\cite{Castroetal.2022} and the study of coupled oscillators~\cite{BickLohse.2019}. In some cases a more fine-grained understanding of the transitions between different states is needed to fully grasp their mechanisms and function---e.g., due to the presence of an intrinsic hierarchy in the sequential processes, where modulations typically occur on a ``lower level'' until the behavior changes substantially, caused by another process on a ``higher level''. Such hierarchical transitions have been reported to play a crucial role in numerous applications (with no claim to completeness of the following list): 
In neuroscience, hierarchy has been identified as a key feature for a multitude of cognitive processes, such as creativity, memory formation, and learning, see~\cite{Ortmanns2023,Rabinovich.2020,KoksalErsoz.2025} and references therein.
In mathematical biology, it can cause polyrythms in central pattern generators believed to steer animal movement \cite{Wojcik.2014}.
Temporal modulation of the possible transitions between different states  according to a higher-level process is also relevant for social and biological networks with competition dynamics~\cite{Holme.2015}. It may indicate the temporal modification of the network itself, for example due to physical proximity of mobile individuals or due to interaction restrictions via temporally evolving natural environments.

In recent years, a complementary question to observing heteroclinic or excitable structures in given systems has drawn increased attention: How to construct a dynamical system that possesses a desired connection structure that is prescribed by a given directed graph (digraph)? This is commonly referred to as \emph{realizing the digraph as a heteroclinic/excitable cycle or network}. Different methods have been derived to achieve this under varying assumptions on the initial digraph~\cite{AshPos2013,Ashwin.2016,Field_2015,Field_2017}, and refined with a focus on properties of the resulting networks~\cite{AshCasLoh2020, CastroLohse.2025} as well as efficiency in terms of the required state space dimension~\cite{CastroLohse.2023, GarridodaSilva.2026}. Systematic obstructions to the emergence of desired heteroclinic connections have been investigated in \cite{Bick.2024b}.

In this paper, we specifically address the hierarchical aspect in the context of this question by introducing a construction method that realizes given connection structures with an inherent hierarchy. We start with a finite set of digraphs $\G_1,\dotsc, \G_N$, the lower level of the hierarchy, and another digraph $\GG$ on $N$ vertices which corresponds to the top level. Our construction produces heteroclinic networks $\Net_1,\ldots, \Net_N$ realizing $\G_1,\dotsc, \G_N$ in bounded regions of suitable subspaces of a high-dimensional phase space. This is done by adapting one of the established techniques, the simplex realization method from~\cite{AshPos2013}. In our system, the top-level transitions are created not as heteroclinic, but excitable connections~\cite{Ashwin.2016} between the $\Net_j$: Unlike a heteroclinic connection, an excitable connection from $\Net_i$ to $\Net_k$ passes through a small neighborhood of, but does not necessarily limit to $\Net_i$ in backwards time---in fact, the $\alpha$-limit set may be empty. In forward time, such a connection converges to $\Net_k$ and causes nearby dynamics which are phenomenologically indistinguishable from their heteroclinic counterpart. In our system, excitable connections from $\Net_i$ to $\Net_k$ exist in any $\delta$-neighborhood of $\Net_i$, this is called a connection with threshold zero.
Examples for similar hierarchical dynamics have been investigated in the literature, see e.g.~\cite{VoitMeyerOrt2018} and \cite{Bick.2026} for a network of coupled phase oscillators, but to our knowledge, this is the first time a systematic construction process is proposed.

Hierarchical transition structures in phase space are related to the concept of depth for a heteroclinic connection that was introduced in~\cite{AshFie1999}: Roughly speaking, a heteroclinic connection is of depth one if its $\alpha$- and $\omega$-limit sets both consist of a single equilibrium. It is of higher depth if these sets are larger: A typical heteroclinic trajectory of depth two limits (in forward and/or backward time) to a heteroclinic cycle, see~\Cref{fig:depth2}. 

The rest of this paper is structured as follows: In~\Cref{sec:prelim}, we recall the precise definitions of heteroclinic/excitable connections and their relation. We also briefly review the simplex method for realizing digraphs as heteroclinic networks by Ashwin and Postlethwaite~\cite{AshPos2013}. \Cref{sec:main} contains our main result: We define the term \emph{excitable hierarchical network} and prove a realization result for our construction method. In~\Cref{sec:numerics}, we discuss two examples in detail and show numerical simulations that illustrate our results and lead to additional conjectures. We conclude with a discussion in~\Cref{sec:discussion}.

%%%%
\section{Preliminaries}
\label{sec:prelim}
Throughout this paper we are concerned with dynamical systems
\begin{align}
    \label{sys1}
\dot x = f(x)
\end{align}
where $x \in \R^d$ and $f: \R^d \to \R^d$ is a vector field that generates a smooth flow.
We first recall some necessary background on heteroclinic and excitable connections between equilibria of system~\eqref{sys1} and extend it to transitions between invariant sets in phase space. Then we review the simplex method from \cite{AshPos2013}, which provides a way of constructing dynamical systems with heteroclinic connections corresponding to a prescribed directed graph. As we generalize this method in the remainder of this paper, this section also serves to fix important notation.

\subsection{Excitable and heteroclinic connections}
\label{subsec:ex-het}
Suppose that system~\eqref{sys1} possesses two hyperbolic equilibria $\xi_1, \xi_2 \in \R^d$.
If the intersection between their respective unstable and stable manifolds is non-empty, i.e.,
\[W^u(\xi_1) \cap W^s(\xi_2) \neq \emptyset,\]
then any trajectory in this intersection converges to $\xi_2$ in forward time, and to $\xi_1$ in backwards time. Such a solution is called a \emph{heteroclinic connection from $\xi_1$ to $\xi_2$}, see the left panel in \Cref{fig:het-ex}, and we often denote it by $[\xi_1 \to \xi_2]$. It is well-known that heteroclinic connections can form cycles and networks (connected unions of several cycles) that are robust to perturbations within a certain class of dynamical systems---one of the most prominent cases are equivariant systems and Lotka-Volterra systems, see e.g.~\cite{Weinberger.2018} for an overview.

\begin{figure}
    \centering
    \includegraphics[width=0.4\linewidth]{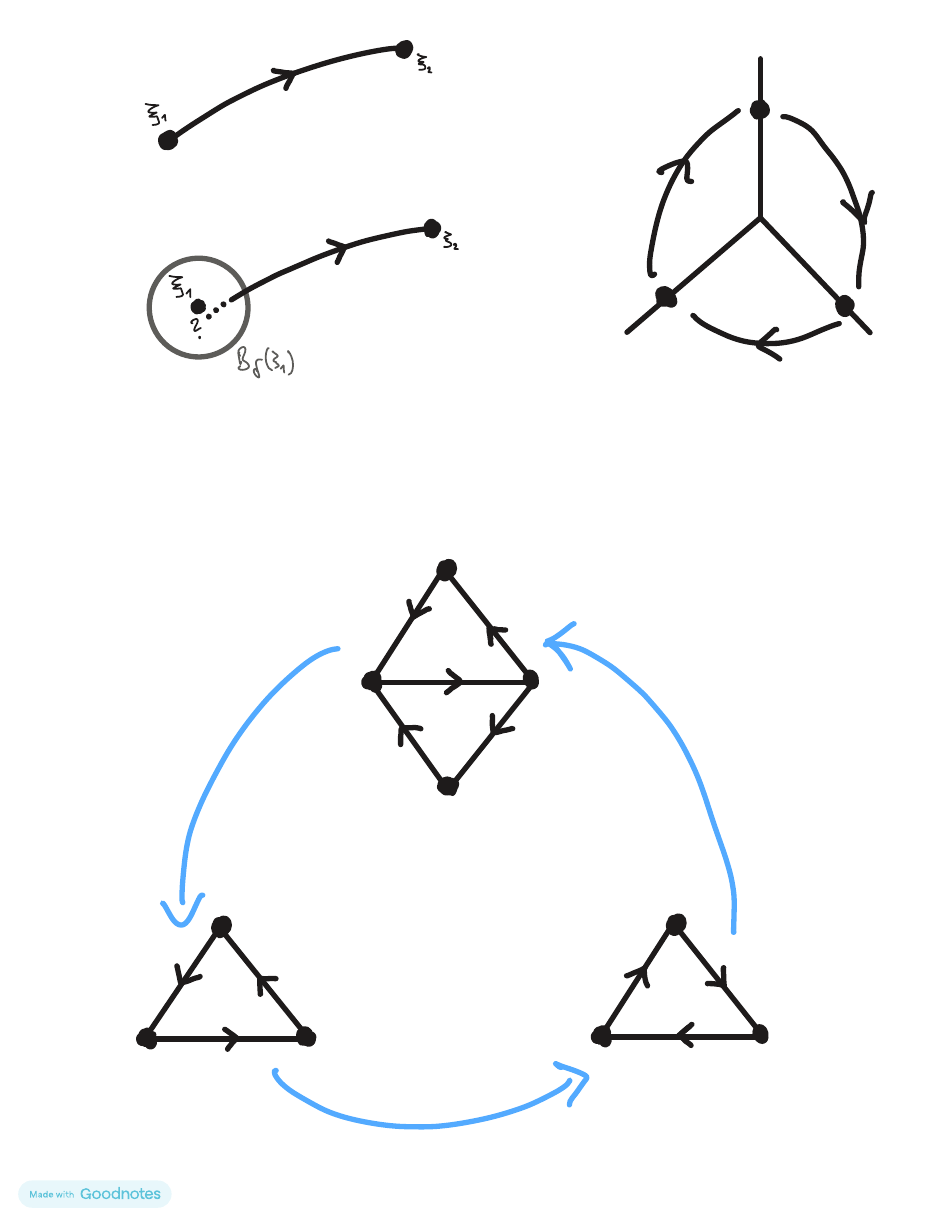}
    \qquad
    \includegraphics[width=0.4\linewidth]{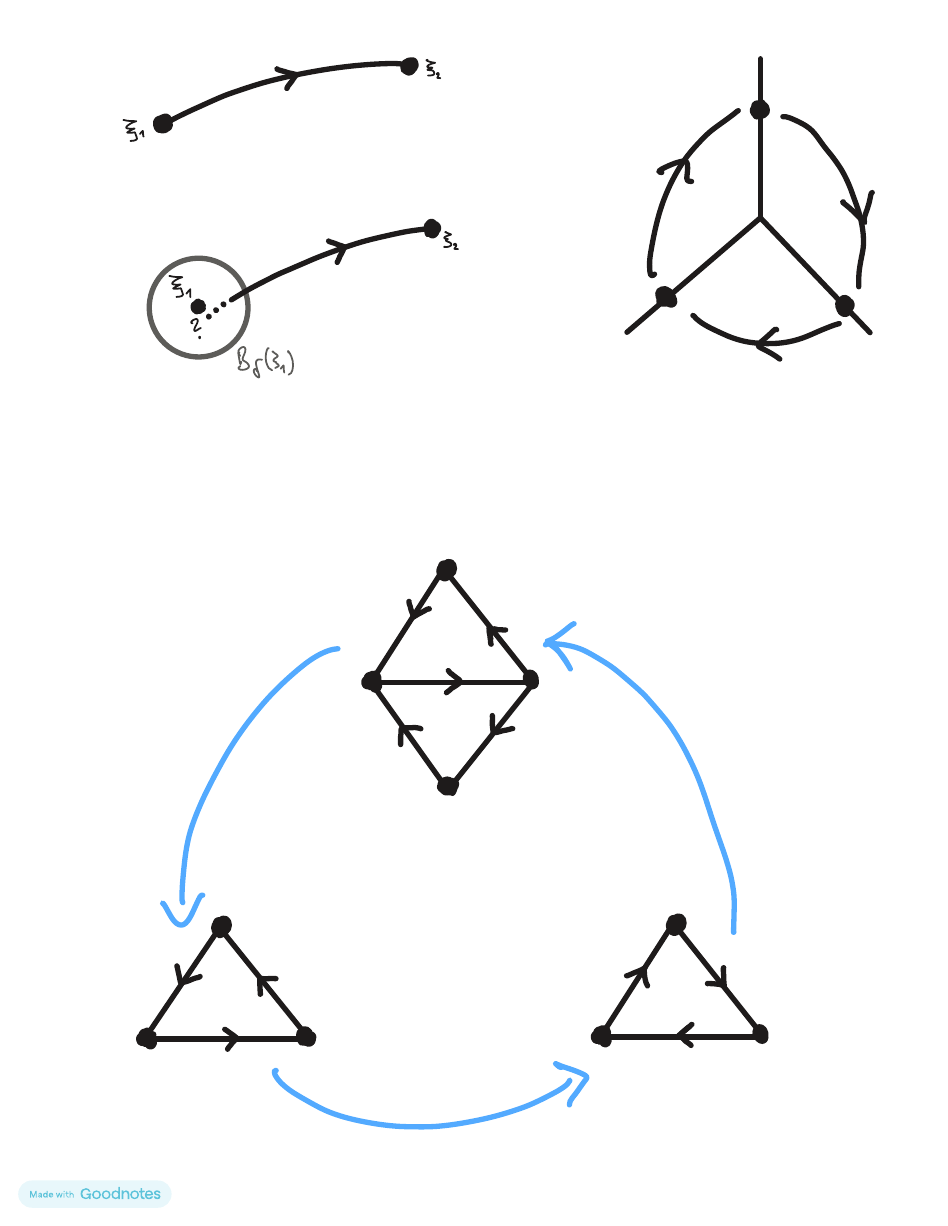}

    \caption{Heteroclinic (left) and an excitable (right) connections from $\xi_1$ to~$\xi_2$.}
    \label{fig:het-ex}
\end{figure}

If a heteroclinic connection exists, it is not unusual for the intersection of the respective invariant manifolds to be of dimension greater than one. This implies that there are uncountably many single connecting trajectories between the equilibria, and choosing any non-empty subset leads to an invariant set that may be studied as part of a heteroclinic network.
 
More generally, one can consider heteroclinic connections not only between equilibria, but between arbitrary invariant sets: In line with definition~2.1 in \cite{Weinberger.2018}, we say that there is a heteroclinic connection $[S_1 \to S_2]$ between two non-empty invariant sets $S_1, S_2 \subset \R^d$, if there exists at least one point $x \in \R^d$ such that $\emptyset \ne \alpha(x) \subset S_1$ and $\emptyset \ne \omega(x)\subset S_2$, where $\alpha(x)$ and $\omega(x)$ are the $\alpha$- and $\omega$-limit sets of $x$, respectively.

If the sets $S_1$ and $S_2$ are heteroclinic cycles themselves, this leads to the concept of \emph{depth of a heteroclinic connection}, see Definition 2.22 in~\cite{AshFie1999}: Roughly speaking, a connection is of depth two (or more) if there is some $x$ in this connection such that $\alpha(x)$ and/or $\omega(x)$ contains not only equilibria. In this sense, a typical heteroclinic connection between equilibria is of depth one, since all $\alpha$- and $\omega$-limits are equilibria. An example of a depth two connection is sketched in~\Cref{fig:depth2}, where the $\alpha$-limit of a trajectory is a heteroclinic cycle between three equilibria.

\begin{figure}
    \centering
    \includegraphics[width=0.6\linewidth]{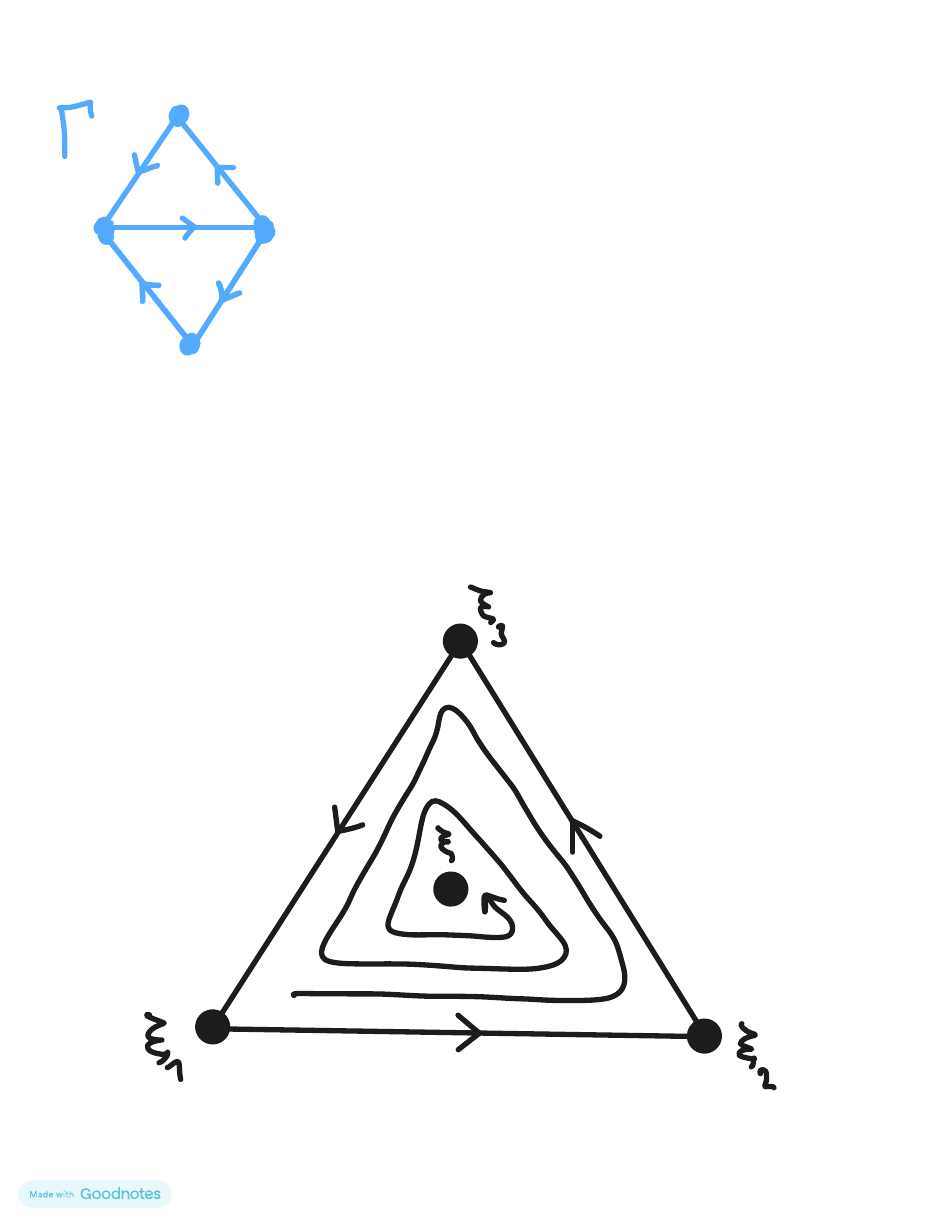}
    \caption{Heteroclinic connection of depth two from the cycle $[\xi_1 \to \xi_2 \to \xi_3]$ to~the equilibrium $\xi$; can also be interpreted as three excitable connections with threshold zero from each equilibrium $\xi_1, \xi_2, \xi_3$ to $\xi$.}
    \label{fig:depth2}
\end{figure}

Strictly speaking, if $S_1$ and $S_2$ in our definition above are heteroclinic cycles, a connection $[S_1 \to S_2]$  between them does not have to be of higher depth: If there is a depth one connection from an equilibrium in $S_1$ to another equilibrium in $S_2$ this will satisfy our definition for the existence of a connection $[S_1 \to S_2]$. A stronger requirement would be to ask for $\alpha(x) = S_1$ and $\omega(x)=S_2$ above, but that is too restrictive for our purposes when $S_1$ and $S_2$ are heteroclinic networks: A connection between two networks might typically connect only parts of the networks on both ends, e.g.\ subcycles. We point out this subtlety to alert the reader and therefore will provide additional information beyond the mere existence of a connection $[S_1 \to S_2]$ whenever appropriate.

Another, slightly weaker concept for connections in phase space is of interest, especially when heteroclinic connections are not present. Denote by $B_\delta(\cdot)$ the open neighborhood of size $\delta>0$ around a point or set in $\R^d$. An \emph{excitable connection for amplitude $\delta >0$} exists between the equilibria $\xi_1$ and $\xi_2$ if
\[B_\delta(\xi_1) \cap W^s(\xi_2) \neq \emptyset,\]
i.e., if the stable manifold of $\xi_2$ comes $\delta$-close to (but does not necessarily converge to) $\xi_1$ in backwards time, see~\cite{Ashwin.2016} and the right panel in \Cref{fig:het-ex} for an illustration. This allows for cyclic connection structures involving sinks, which is not possible for heteroclinic cycles, where every equilibrium must have at least one incoming and one outgoing direction, and thus non-trivial stable and unstable manifolds. For an excitable connection from $\xi_1$ to $\xi_2$ the threshold $\delta_{\textnormal{th}}(\xi_1,\xi_2)$ is defined as
\[\delta_{\textnormal{th}}(\xi_1,\xi_2):= \inf\{ \delta>0 \mid B_\delta(\xi_1) \cap W^s(\xi_2) \neq \emptyset \}.\]
Typically, excitable connections are studied in the presence of noise: Disturbances exceeding the threshold may push a trajectory out of the basin of attraction of $\xi_1$ so that it follows $W^s(\xi_2)$ and approaches $\xi_2$. A heteroclinic connection can be viewed as excitable with zero threshold. As already pointed out in~\cite{Ashwin.2016}, the existence of a depth-two heteroclinic connection from a cycle $C$ to an equilibrium $\xi$ implies that there are zero threshold excitable connections from each equilibrium in $C$ to $\xi$ which are not heteroclinic (between these equilibria), see~\Cref{fig:depth2}.

We extend the notion of an excitable connection to general invariant sets, just as for heteroclinic connections above:
\begin{definition}
We say that there is an \emph{excitable connection for amplitude $\delta>0$ between two non-empty invariant sets $S_1,S_2 \subset \R^d$} if there is $x \in B_\delta(S_1)$ with $\emptyset \ne \omega(x) \subset S_2$. This is denoted as $[S_1 \to S_2]_\delta$.
\end{definition}
\noindent
The existence of an excitable connection from $S_1$ to $S_2$ means that there is a trajectory that converges to $S_2$ in forward time and comes $\delta$-close to $S_1$ at some point in the past, but typically not for $t \to -\infty$. 

As for a heteroclinic connection between two sets one may now ask whether the relevant $\omega$-limit set consists of a single equilibrium or whether it contains more points, e.g.\ a heteroclinic cycle or network. This could prompt a notion of \emph{depth of an excitable connection}. However, a formal definition seems rather technical and, for our analysis, yields no additional insight, which is why we do not take this concept further here.

In this paper, we construct vector fields which exhibit excitable connections with zero threshold between heteroclinic networks. These connections are not heteroclinic at all, i.e.,\ neither between equilibria nor between cycles/networks, because their $\alpha$-limits are empty. In contrast to the previous studies of excitable connections that we are aware of, we do not consider noise in our systems. Since our excitable connections have zero threshold, they affect the dynamics in a way that is very similar to a heteroclinic connection in forward time, but not in backwards time. We elaborate on this later on when we explain and discuss our construction method.

%%%
\subsection{Design---The simplex method}
Let $\G$ be a directed graph on a finite set of vertices $\{v_1,\dotsc,v_n\}$. A directed edge from $v_i$ to $v_k$ is represented by $(v_i,v_k)$. Further, denote by $(A_{ik})_{i,k=1}^n$ the adjacency matrix of $\G$, i.e., its entries are $A_{ik}=1$ if there is an edge from $v_i$ to $v_k$ in $\G$, and $A_{ik}=0$ otherwise. We are interested in constructing a dynamical system~\eqref{sys1} with connections corresponding to the structure of~$\G$: 
\begin{definition}
    We say that system~\eqref{sys1} \emph{realizes $\G$ as an excitable network for amplitude $\delta>0$}, if
    \begin{enumerate}[(i)]
        \item for every vertex $v_i$ in $\G$ there is an invariant set $S_i\subset\R^d$, such that $S_i\cap S_k=\emptyset$ if $v_i\ne v_k$;
        \item there is an excitable connection $[S_i\to S_k]_{\delta}$ if and only if there is an edge $(v_i,v_k)$ in $\G$.
    \end{enumerate}
\end{definition}
\begin{remark}
    This is a straightforward generalization of the definition of a realization of a digraph as an excitable network on equilibria given in \cite{Ashwin.2016}.
\end{remark}
Several methods exist to systematically construct a vector field $f$ which realizes a given digraph $\G$, see~\cite{AshPos2013, Ashwin.2016, Field_2015, Field_2017}. In particular, any realization as a heteroclinic network is also a realization as an excitable network with any amplitude $\delta>0$ (i.e., all connections have threshold $0$).
Depending on the realization method, there are some (mild) restrictions on the digraph, and additional equilibria may be created as a byproduct.

In this subsection, we briefly review the simplex method from~\cite{AshPos2013}, which generates a heteroclinic network for a given digraph $\G$. It requires $\G$ to have no $1$- and $2$-cycles, and its key features are as follows: For each vertex in $\G$, a coordinate dimension is created, and thus, the required space dimension is equal to the number $n$ of vertices in $\G$. All equilibria belonging to the heteroclinic network lie on coordinate axes, and all coordinate axes are dynamically invariant. Coordinate planes contain the desired heteroclinic connections prescribed by $\G$, even though connections may also exist in higher-dimensional subspaces.

The vector field is polynomial and of the following form:
\begin{equation}
    \label{eq:simplex}
    \dot x_i = x_i \left(1-\|x\|^2+\sum_{k=1}^n a_{ik}x_{k}^2 \right),
\end{equation}
where $x=(x_1,\dotsc,x_n)^T\in\R^n$. If the coefficients $a_{ik}$ are chosen as
\begin{equation}
    \label{eq:simplex_coeff}
    a_{ik}>0 \text{ if } A_{ik}=1,\; a_{ik}<0 \text{ if } A_{ik}=0, \text{ and } a_{ii}=0,
\end{equation}
then \eqref{eq:simplex} realizes $\G$ as a heteroclinic structure, see~\cite[Proposition~1]{AshPos2013}. In particular, it is a realization of $\G$ in the sense of \Cref{def:realization} for any amplitude $\delta>0$. Note that in system~\eqref{eq:simplex}  all coordinate axes (and thus all coordinate subspaces) are dynamically invariant, and the equation has $\Z_2^n$ symmetry, where $\Z_2$ acts by reflection across each coordinate hyperplane.

%%%%
\section{Two level hierarchy---The simplex-simplex method}
\label{sec:main}

In this section, we introduce a new construction method to generate dynamical systems that exhibit heteroclinic/excitable structures with two hierarchical levels. 
Consider directed graphs $\G_1,\dotsc,\G_N$ such that $\G_j$ connects $n_j$ vertices $\{v^j_1,\dotsc,v^j_{n_j}\}$ for each $j=1,\dotsc,N$. These take the role of different connection structures on the lower hierarchical level.
Further, let $\GG$ be a directed graph on $N$ vertices $\{V_1,\dotsc,V_N\}$, representing the superstructure between the individual connection structures given through the $\G_j$. As above, denote by $(A^j_{ik})_{i,k=1}^{n_j}$ and $(A_{ik})_{i,k=1}^N$ the adjacency matrices of $\G_j$ and $\GG$ respectively.

\begin{definition}
	\label{def:realization}
	We say that system~\eqref{sys1} \emph{realizes a collection of digraphs $\G_1,\dotsc,\G_N$ and $\GG$ as an excitable hierarchical network $\Net$} for amplitude $\delta>0$, if
	\begin{enumerate}[(i)]
		\item every $\G_j$ is realized as an excitable network $\Net_j$ for amplitude $\delta$,
		\item $\Net$ is a realization of $\GG$ for amplitude $\delta$ on the invariant sets $\{\Net_j\}_{j=1}^N$.
	\end{enumerate}
\end{definition}
\begin{remark}
    A hierarchical collection of digraphs $\G_1,\dotsc,\G_N$ and $\GG$ as above can be seen as a \emph{higher order} interaction structure, such as a directed hypergraph \cite{Ausiello.2017}. More precisely, it can be represented by generalizations of hypergraphs which allow for connections between edges, such as an unweighted $2$-depth pangraph \cite{Iskrzynski.2025}. In this interpretation, \Cref{def:realization} describes the realization of (specific instances of) such higher order structures as a heteroclinic or excitable connection structure.
\end{remark}

%%%
\subsection{Construction}
Our construction method realizes the connection structures on both hierarchical levels via the simplex method.
Consider the following system:
\begin{subequations}
\label{eq:simplex_simplex}
\begin{align}
\label{eq:simplex_simplex_a}
\dot X_j &= X_j \left(1-\|X\|^2+\sum_{k=1}^N a_{jk}X_{k}^2 \right) \\[5pt]
\label{eq:simplex_simplex_b}
\dot x^j_i &= x^j_i \left(\left( 1 - \|x^j\|^2 + \sum_{k=1}^{n_j} \alpha^j_{ik}(x^j_k)^2 \right) b^j_\varepsilon(X) - (1-b^j_\varepsilon(X)) \right)
\end{align}
\end{subequations}
Here, $X=(X_1,\dotsc,X_N)^T\in\R^N$ and its dynamics drives the heteroclinic super\-structure---it is decoupled from the dynamics of the remaining variables.
For suitable parameter values, it is the standard simplex realization of $\GG$.
Furthermore, $x^j=(x^j_1,\dotsc,x^j_{n_j})\in\R^{n_j}$ for $j=1,\dotsc,N$ are helper variables driving heteroclinic dynamics within the heteroclinic substructures $\G_1,\dotsc,\G_N$ with parameters chosen according to the simplex method. 

The functions $b^j_\varepsilon\colon\R^N\to\R$ are modifications of the bump or transition function $b_\varepsilon\colon\R\to\R$ given by
\begin{equation}
    \label{eq:bump}
    b_\varepsilon(z) = \begin{cases}
    	1, &z\le0,\\
        1-\frac{\exp\left(-\frac{\varepsilon}{z}\right)}{\exp\left(-\frac{\varepsilon}{z}\right)+\exp\left(-\frac{\varepsilon}{\varepsilon-z}\right)} , &0 < z <\varepsilon \\
        0, &\text{otherwise}
    \end{cases} 
\end{equation}
defined through
\[ b^j_\varepsilon(X) = b_\varepsilon(\|X-X^j\|^2), \]
where
\[ X^j = (0,\dotsc,0,\underset{\substack{\uparrow\\j\text{-th}}}{1}, 0,\dotsc,0)^T \]
are the equilibria of the superstructure realization \eqref{eq:simplex_simplex_a}.

Heuristically, system~\eqref{eq:simplex_simplex} is the coupling of multiple simplex systems in coordinates $x^j$, given in \eqref{eq:simplex_simplex_b}, while \eqref{eq:simplex_simplex_a} guarantees the connection via a simplex-like driving superstructure. The subsystems that realize the lower hierarchical level are further adapted as a convex combination via a bump function. It causes the dynamics of every system on the lower hierarchical level to be a realization of the corresponding digraph only near one of the equilibria of the superstructure and to decay to zero everywhere else (to guarantee stability). The role of the transition function can further be inferred from the sketch of its graph in \Cref{fig:bump}.
As there is no feedback, the system of the superstructure can be seen as a driver for the full dynamics.

\begin{figure}
    \centering
    \includegraphics[width=0.5\linewidth]{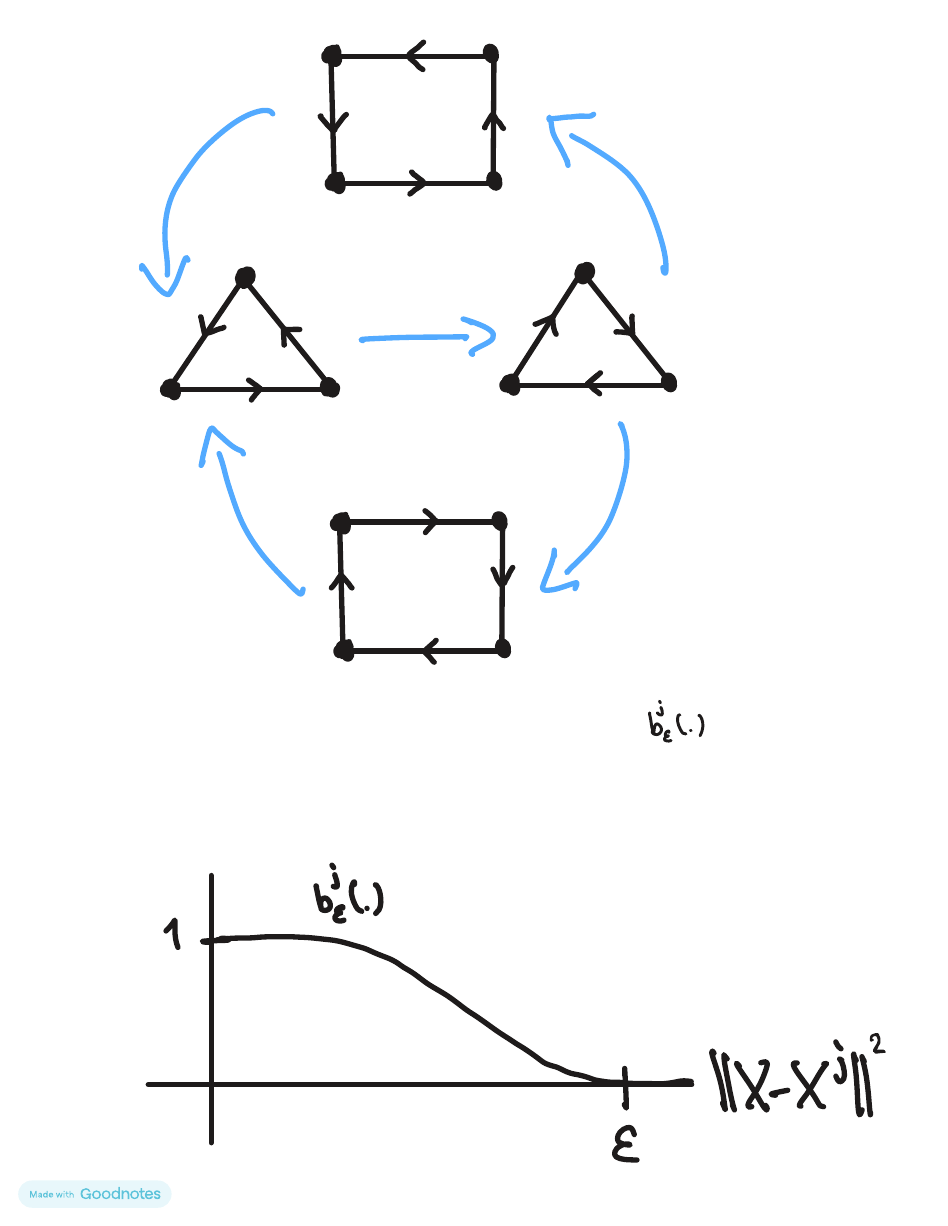}
    \caption{Sketch of the transition function $b_\varepsilon^j$ used in \eqref{eq:simplex_simplex}.}
    \label{fig:bump}
\end{figure}

%%%
\subsection{The realization result}
We formalize the heuristical description of the dynamics of \eqref{eq:simplex_simplex} as follows:
\begin{theorem}
    \label{thm:simplex_simplex}
    Let $\G_1,\dotsc,\G_N$ and $\GG$ be a collection of directed graphs as above, and suppose that they contain no $1$-cycles and no $2$-cycles. Choose the coefficients $a_{jk}$ and $\alpha^j_{ik}$ as in \eqref{eq:simplex_coeff}, according to the adjacency matrices of $\GG$ and of $\G_1,\dotsc,\G_N$ respectively and let $\varepsilon<\frac{\sqrt{2}}{2}$. Then system~\eqref{eq:simplex_simplex} realizes this collection of digraphs as an excitable hierarchical network for any amplitude $\delta>0$. In particular, $\G_1,\dotsc,\G_N$ are realized as heteroclinic networks, while the realization of $\GG$ contains excitable connections that are not heteroclinic.
\end{theorem}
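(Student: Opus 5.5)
We exploit the skew-product structure of \eqref{eq:simplex_simplex}: the $X$-equation \eqref{eq:simplex_simplex_a} is autonomous and, by \cite[Proposition~1]{AshPos2013}, realizes $\GG$ as a heteroclinic network with equilibria $X^j$ on the coordinate axes and connections $[X^j\to X^k]$ in the coordinate planes $\{X_l=0,\ l\ne j,k\}$ exactly when $A_{jk}=1$. The plan is to define the invariant sets
\[
\Net_j=\{X^j\}\times \mathcal N_j\times\{0\}\subset\R^N\times\R^{n_1}\times\dotsb\times\R^{n_N},
\]
where $\mathcal N_j\subset\R^{n_j}$ is the simplex realization of $\G_j$ in the $x^j$ coordinates and all other $x^l$, $l\ne j$, vanish. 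The constraint $\varepsilon<\frac{\sqrt2}{2}$ enters because $\|X^j-X^l\|^2=2$: it gives $b^j_\varepsilon(X^j)=1$ and $b^l_\varepsilon(X^j)=0$ for $l\ne j$, and more generally the supports $\{\|X-X^l\|^2<\varepsilon\}$ are pairwise disjoint.

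The first step is to check condition~(i) of \Cref{def:realization}. On the invariant slice $\{X=X^j\}$, the $x^j$-equation reduces exactly to \eqref{eq:simplex} with coefficients $\alpha^j_{ik}$, so \cite[Proposition~1]{AshPos2013} provides the heteroclinic network $\mathcal N_j$ realizing $\G_j$. For $l\ne j$, the $x^l$-equation becomes $\dot x^l_i=-x^l_i$, so $x^l=0$ is invariant (indeed attracting). Heteroclinic connections are excitable with threshold zero, so the realization holds for any $\delta$. The sets $\Net_j$ are pairwise disjoint because their $X$-components differ.

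The second step, the core of the argument, is condition~(ii). Assume $A_{jk}=1$ and fix $\delta>0$. Pick a point $X_0$ on the heteroclinic trajectory $[X^j\to X^k]$ with $\|X_0-X^j\|<\delta/2$ and $\|X_0-X^j\|^2<\varepsilon$. Pick $x^j_0$ within $\delta/2$ of $\mathcal N_j$, and set $x^l_0=0$ for $l\ne j$. Then the initial point lies in $B_\delta(\Net_j)$. Along the $X$-orbit we have $X(t)\to X^k$, so eventually $\|X(t)-X^l\|^2\ge\varepsilon$ for all $l\ne k$. After that time $\dot x^l=-x^l$, so $x^l\to0$ exponentially; in particular $x^j\to 0$ and the $x^l$ with $l\ne j,k$ remain $0$. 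To reach $\Net_k$ we also need $x^k$ nonzero, and this is the subtle point. Starting from $x^k_0=0$ gives $\omega$-limit $\{X^k\}\times\{0\}$, which is not contained in $\mathcal N_k$ (the origin is not part of the network), since coordinate axes are invariant. I would therefore choose $x^k_0$ small but nonzero, within $\delta/2$ of $0$. The distance from $\Net_j$ still stays below $\delta$, because the $x^k$ component of $\Net_j$ is $0$. Once $b^k_\varepsilon\equiv1$ for large $t$, the $x^k$ dynamics is asymptotically the simplex system \eqref{eq:simplex}, for which the origin is a repeller (the linearization there is the identity). Hence $x^k$ leaves a neighborhood of $0$, and its $\omega$-limit lies in the attracting part of the simplex dynamics, i.e.\ within $\mathcal N_k$. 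The main obstacle is making this rigorous. One has to handle the nonautonomous, asymptotically autonomous $x^k$-equation, with $b^k_\varepsilon(X(t))\to1$ in finite time since $X(t)$ enters the support neighborhood, and then appeal to the fact from \cite{AshPos2013} that the unit sphere invariant set attracts all nonzero initial conditions in the $x^k$ subsystem. Fully attracting requires the $\omega$-limit to lie in the network rather than at other equilibria on the sphere. I would restrict $x^k_0$ to a coordinate axis of $\R^{n_k}$; the axis is invariant, the motion converges to the equilibrium $v^k_i$, and that equilibrium belongs to $\mathcal N_k$. This yields an $\omega$-limit that is nonempty and contained in $\Net_k$.

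The third step handles the converse and the non-heteroclinic claim. If $A_{jk}=0$, then no orbit of \eqref{eq:simplex_simplex_a} starting near $X^j$ has $\omega$-limit $X^k$ for all small $\delta$, because $X^j$ has no unstable direction toward $X^k$ (here $a_{jk}<0$). So no excitable connection $[\Net_j\to\Net_k]_\delta$ exists, at least for $\delta$ small; for large $\delta$ one must argue using the $X$-projection and the structure of the simplex flow, and I expect to need care there. For the non-heteroclinic claim, take any orbit with $\omega$-limit in $\Net_k$ and $\alpha$-limit in $\Net_j$. Its $x^k$ component would have to satisfy $x^k(t)\to0$ as $t\to-\infty$ while obeying $\dot x^k=-x^k$ near $X^j$, which is backward-expanding. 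This forces $x^k\equiv0$, which contradicts convergence to $\mathcal N_k$. Hence these connections have empty or wrong $\alpha$-limit, so they are excitable with threshold zero but not heteroclinic.
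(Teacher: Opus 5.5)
Your treatment of condition~(i), of the existence half of~(ii), and of the non-heteroclinic claim is the paper's own argument. It uses the invariant slices $\{X=X^j,\ x^l=0\ (l\ne j)\}$, on which the $x^j$-equation is the simplex system. It takes an initial point whose $X$-part lies on $[X^j\to X^k]$ and whose $x^k$-part is small and on a coordinate axis (the paper takes $x^k_1>0$ and $x^k_i=0$ for $i\ge2$). This gives the scalar equation $\dot x^k_1=x^k_1\bigl((1-(x^k_1)^2)b^k_\varepsilon(X)-(1-b^k_\varepsilon(X))\bigr)$, whose positive zero $\sqrt{2-1/b^k_\varepsilon}$ tends to $1$. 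Finally, the backward growth comes from $\dot x^k=-x^k$ near $X^j$. Two small corrections. First, $b_\varepsilon(z)<1$ for every $z>0$, so $b^k_\varepsilon(X(t))<1$ at all finite times and ``$b^k_\varepsilon\equiv1$ for large $t$'' is wrong; only $b^k_\varepsilon(X(t))\to1$ holds, and that is all the scalar argument needs. Second, the supports $\{\|X-X^l\|^2<\varepsilon\}$ are balls of radius $\sqrt\varepsilon$ around points at distance $\sqrt2$, so they are disjoint only for $\varepsilon\le\frac12$. The paper makes the same claim, but nothing in your argument uses disjointness.

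The genuine gap is the ``only if'' half of~(ii), and it cannot be repaired, because that half is false as stated. Your inference ``$X^j$ has no unstable direction toward $X^k$, hence no orbit starting near $X^j$ converges to $X^k$'' replaces a global statement by a local one: orbits near $X^j$ can reach $X^k$ through intermediate vertices.

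Take $\GG$ with edges $V_1\to V_2$ and $V_2\to V_3$ only. In its simplex realization, $X^1$ and $X^2$ are saddles whose only unstable directions are $X_2$ and $X_3$ respectively, and $X^3$ is a sink. Points arbitrarily close to $X^1$ with $X_2>0$ and sufficiently small $X_3>0$ follow $[X^1\to X^2]$. They pass $X^2$ on the side $X_3>0$ of its local stable manifold $\{X_3=0\}$, leave along $[X^2\to X^3]$, and converge to $X^3$. Lift such a point with $x^1=(1,0,\dots,0)$, $x^2=0$ and $x^3=(\eta,0,\dots,0)$, $\eta>0$ small. Your own existence argument then gives $\omega\subset\Net_3$. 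So $[\Net_1\to\Net_3]_\delta$ exists for every $\delta>0$ although $(V_1,V_3)$ is not an edge; this is precisely the zero-threshold mechanism of \Cref{fig:depth2}.

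Independently of this, every equilibrium of $\Net_k$ lies at distance $2$ from an equilibrium of $\Net_j$ and is its own $\omega$-limit. Hence for $\delta>2$ all ordered pairs are connected, and the large-$\delta$ case you postpone is a counterexample, not a technicality.

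The paper's proof treats this direction with the same local argument, restricted to the invariant plane $\{X_l=0,\ l\ne j,k\}$, so it does not supply the missing idea either. What both arguments actually establish is a restricted converse: for $\delta$ below a threshold, and for connecting orbits inside the invariant subspace $\{X_l=0,\ x^l=0 \text{ for } l\ne j,k\}$. The realization statement has to be weakened accordingly before it can be proved.
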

\begin{proof}
    The proof follows from a hierarchical application of Proposition 1 in \cite{AshPos2013}, which gives the reasoning behind the simplex method. For this discussion, it is convenient to denote the phase space of \eqref{eq:simplex_simplex} as
    \[ \X = \X_s \oplus \X^1\oplus\dotsb\oplus\X^N, \]
    where $\X_s$ is spanned by the coordinates $X_1,\dotsc,X_N$ and $\X^j$ for $j=1,\dotsc,N$ is spanned by the coordinates $x^j_1,\dotsc,x^j_{n_j}$. This means that $b^j_\varepsilon\colon\X_s\to\R$.

    System \eqref{eq:simplex_simplex_a} is decoupled from \eqref{eq:simplex_simplex_b} and describes the restriction of the full dynamics to $\X_s$. The system and the chosen parameters $a_{jk}$ are identical to the system discussed in Proposition~1 of \cite{AshPos2013}, which therefore guarantees that \eqref{eq:simplex_simplex_a} constitutes a realization of $\GG$ as a heteroclinic network via the simplex method. In particular, every vertex $V_j$ of $\GG$ corresponds to an equilibrium $X^j\in\X_s$ and there is a heteroclinic connection from $X^j$ to $X^k$ if and only if there is an edge $(V_j,V_k)$ in $\GG$.
    Below, these heteroclinic connections in the dynamics of subsystem \eqref{eq:simplex_simplex_a} will be used to construct excitable connections between suitable invariant sets in the full dynamics of \eqref{eq:simplex_simplex}.
    
    To analyze the dynamics of \eqref{eq:simplex_simplex_b}, we first make the observation that the subspaces $\{x^j_i=0\}\subset\X$ are dynamically invariant for any $i, j$. Hence, so are the affine subspaces
    \begin{align*}
     \Xi^j &= \big\{ (X_1,\dotsc,X_N,x^1_1,\dotsc,x^1_{n_1},\dotsc,x^N_1,\dotsc,x^N_{n_N}) \mid\\
     &\qquad X_j=1, X_k=x^k_i=0 \text{ for all } k\ne j, i=1,\dotsc,n_k \big\},
     \end{align*}
    as therein $X=X^j$, which is an equilibrium for \eqref{eq:simplex_simplex_a}. In $\Xi^j$, the only (dynamically) non-trivial coordinates are $x^j_1,\dotsc,x^j_{n_j}$. Their dynamics (in $\Xi^j$) is governed by \eqref{eq:simplex_simplex_b}, which reduces to
    \[ \dot x^j_i = x^j_i \left( 1 - \|x^j\|^2 + \sum_{k=1}^{n_j} \alpha^j_{ik}(x^j_k)^2 \right) \]
    due to the fact that $X=X^j$ which causes $b^j_\varepsilon(X)=1$. This system and the chosen parameters $\alpha^j_{ik}$ are identical to the system discussed in Proposition~1 of \cite{AshPos2013}, which therefore guarantees that the restricted system constitutes a realization of $\G_j$ as a heteroclinic network via the simplex method. In particular, every vertex $v^j_i$ of $\G_j$ corresponds to an equilibrium $\xi^j_i\in\Xi^j$ with $x^j_i=1$ and $x^j_k=0$ for all $k\ne i$ and there is a heteroclinic connection from $\xi^j_i$ to $\xi^j_k$ if and only if there is an edge $(v^j_i,v^j_k)$ in $\G_j$. As before, each of these heteroclinic connections in particular constitutes an excitable connection $[\xi^j_i\to \xi^j_k]_\delta$ for any $\delta>0$. Note that $\xi^j_i$ are equilibria of the full dynamics of \eqref{eq:simplex_simplex}. The heteroclinic network consisting of these equilibria and the corresponding heteroclinic connections between them is a dynamically invariant set which we denote by $\Net_j\subset\Xi^j\subset\X$.

    To complete the proof, we show that there is indeed an excitable connection $[\Net_j\to\Net_k]_\delta$ for any $\delta>0$ in the full dynamics of \eqref{eq:simplex_simplex} if and only if there is an edge $(V_j,V_k)$ in $\GG$. To that end, we first observe that the supports of the bump functions $b^j_\varepsilon$ are pairwise disjoint for different values of $j$. This is due to the position of the $X^j$ on the coordinate axes and the specific choice of $\varepsilon<\frac{\sqrt{2}}{2}$.
    Now fix an arbitrary $\delta>0$ and consider a trajectory \[\XX(t)=(X_1(t),\dotsc,X_N(t),x^1_1(t),\dotsc,x^1_{n_1}(t),\dotsc,x^N_1(t),\dotsc,x^N_{n_N}(t))\] with initial condition \[\XX=(X_1,\dotsc,X_N,x^1_1,\dotsc,x^1_{n_1},\dotsc,x^N_1,\dotsc,x^N_{n_N})\] in the subspace
    \[ \{X_l=x^l_i=0 \text{ for all } l\ne j,k  \text{ and all } i\} \]
    which is dynamically invariant. In particular, we choose the coordinate entries of $\XX$ such that $\XX\in B_\delta(\Net_j)$. This implies the following:
    \begin{itemize}
        \item $X_j$ is close to $1$,
        \item $X_k$ is close to $0$,
        \item $x^j_1,\dotsc,x^j_{n_j}$ are close to the heteroclinic network realized by the simplex system \eqref{eq:simplex_simplex_b} reduced to $\Xi^j$,
        \item $x^k_1,\dotsc,x^k_{n_k}$ are close to $0$.
    \end{itemize}
    Furthermore, we choose
    \begin{itemize}
            \item $x^k_1>0$ and $x^k_2,\dotsc,x^k_{n_k}=0$, 
            \item $X_j$ and $X_k$ such that $X=(X_1,\dotsc,X_N)$ lies on a connecting trajectory between $X^j$ and $X^k$ for the restricted dynamics \eqref{eq:simplex_simplex_a}. 
    \end{itemize}
    Then, by construction, in forward time $X(t)=(X_1(t),\dotsc,X_N(t))$ determined by \eqref{eq:simplex_simplex_a} converges to $X^k$ while only $X_j, X_k$ are non-zero. In particular, there are times $t^{**}>t^*>0$ such that
    \begin{itemize}
        \item $X(t)\in B_\varepsilon(X^j)$ for $t<t^*$,
        \item $X(t)\notin B_\varepsilon(X^j) \cup B_\varepsilon(X^k)$ for $t\in(t^*,t^{**})$, and
        \item $X(t)\in B_\varepsilon(X^k)$ for $t>t^{**}.$
    \end{itemize}
    For $t\in(t^*,t^{**})$ the convex combination in \eqref{eq:simplex_simplex_b} for $l, i$ is such that all $x_i^l(t)$ decay towards $0$. As soon as $t>t^{**}$, only the $k$-th subsystem slowly changes. Due to the invariance of the coordinate subspaces and our choice of initial condition, we have
    \[x_2^k(t)=x_3^k(t)=\dotsc=x_{n_k}^k(t)=0 \]
    for all $t$, as well as
    \[ \dot x^k_1 = x^k_1 \left( (1 - (x^k_1)^2) b^k_\varepsilon(X) - (1-b^k_\varepsilon(X))\right). \] 
    The zeros of the right hand side of this equation for fixed values of $X$ are 
    \[x_1^k=0 \quad \text{and} \quad x_1^k=\pm \sqrt{2-\frac{1}{b_\varepsilon^k(X)}}.\]
    In particular, along the trajectory $\XX(t)$ through the initial condition specified above, for $t \to \infty$, we get $x_1^k(t) \to 1$ since $b^k_\varepsilon(X(t)) \to b^k_\varepsilon(X^k)= 1$. Therefore, the trajectory $\XX(t)$ converges to the equilibrium where $X=X^k$, $x_1^k=1$ and $x_i^l=0$ for all other $i,j$, which belongs to $\Net_k \subset\Xi^k$. This means $\omega(\XX(t))\subset\Net_k$. As $\delta>0$ was chosen arbitrarily, this finalizes the proof that there exists an excitable connection $[\Net_j\to\Net_k]_\delta$ for any $\delta>0$.

    Conversely, if there is no edge $(V_j,V_k)$ in $\GG$, then no such excitable connection $[\Net_j\to\Net_k]_\delta$ exists. The reason is that there is not even a connection from $X^j$ to $X^k$ in the reduced dynamical system \eqref{eq:simplex_simplex_a} because the simplex method creates dynamics where both $X^j$ and $X^k$ are stable within the subspace $\{X_l = 0 \mid l\ne j,k\}$ if there is no edge between $V_j$ and $V_k$ in $\GG$. This rules out the existence of solutions starting near $X^j$ and converging to $X^k$.

    Finally, the excitable connection $[\Net_j\to\Net_k]_\delta$ constructed above does not form a heteroclinic connection and no such heteroclinic connection can exist in the invariant subspace
    \[ \{X_l = 0 \mid l\ne j,k\}, \]
    which contains $\Net_j, \Net_k$. A trajectory $\XX(t)$ in this subspace with $\omega(\XX(t))\subset\Net_k$ requires $x^k_i\ne0$ for some $i$. If additionally $\alpha(\XX(t))\subset\Net_j$ then necessarily $\lim_{t\to-\infty}X(t)=X^j$. Hence, for some $t^*$ we have that the $k$-th subsystem of \eqref{eq:simplex_simplex_b} reduces to
    \[ \dot x^k_i = -x^k_i \]
    for all $t<t^*$. In particular, $\lim_{t\to-\infty}x^k_i(t)=\pm\infty$. This means $\XX(t)$ has no accumulation points. Thus, we have $\alpha(\XX(t))=\emptyset$ and $\XX(t)$ is not a heteroclinic connection from $\Net_j$ to $\Net_k$.
\end{proof}

In the proof above, we established the existence of excitable connections with equilibria as their $\omega$-limit sets. It seems likely that, for most choices of the directed graphs, more complex connections exist in the resulting system, i.e.,\ trajectories which converge to a full subnetwork $\Net_k$ or a non-trivial heteroclinic substructure of $\Net_k$. This is linked to the stability of $\Net_k$: When $\Net_k$ is, say, fragmentarily asymptotically stable in $\Xi^j$, it is reasonable to expect a connection with a larger $\omega$-limit set than just one equilibrium.

For networks created with the simplex method, no general stability results are known: While the generated network is never asymptotically stable if it contains at least one equilibrium with a higher-dimensional ($\geq 2$) unstable manifold, Ashwin and Postlethwaite~\cite{AshPos2013} conjecture that there is a larger network which can be asymptotically stable, depending on the chosen parameters. This larger network must include the closures of all unstable manifolds and therefore, typically, involves additional equilibria. There is no general way to control the transverse stability of these equilibria and therefore the overall stability of the network: This is addressed in~\cite{AshCasLoh2020}, where an explicit example is given as well.

Our numerical results in \Cref{sec:numerics} support the conjecture that the subnetworks $\Net_k$ usually possess some stability. Deeper insight into this aspect will give further information on the exact nature of the connections created in our construction, but is beyond the scope of this paper.

%%%
\subsection{Robustness}
It is worth noting that in system~\eqref{eq:simplex_simplex} all coordinate subspaces are dynamically invariant by construction. We therefore focus on the non-negative orthant to understand the dynamics. Both the heteroclinic and excitable connections lie in invariant subspaces  and the respective equilibria in the $\omega$-limit sets are sinks within these. Therefore, all connections are robust in the sense that they persist under perturbations to system~\eqref{eq:simplex_simplex} which respect this invariance.

In the original simplex method, there is a $\Z_2$ symmetry in every coordinate, given through the reflection across the respective hyperplane, i.e., the system is equivariant with respect to changing the sign of any component in the state vector. In our system~\eqref{eq:simplex_simplex}, this symmetry is only partially preserved: The subsystem~\eqref{eq:simplex_simplex_a} is $\Z_2^N$ equivariant. The full system, however, respects sign changes of coordinate entries only in the $x^j_i$-entries. Hence, it is $\Z_2^{n_1} \times \ldots \times \Z_2^{n_N}$ equivariant. The reason for this discrepancy is that in the equations for the $x_i^j$-variables~\eqref{eq:simplex_simplex_b} the bump functions $b_\varepsilon^j$ do not respect the symmetry in $X$. A symmetric modification of the $b_\varepsilon^j$ could change this, so that one obtains full $\Z_2^N \times \Z_2^{n_1} \times \ldots \times \Z_2^{n_N}$ symmetry. However, as pointed out above, due to the invariance of the coordinate subspaces this is not necessary to ensure robustness.

%%%
\subsection{Thresholds of excitable connections}
In~\cite{Ashwin.2016}, an excitable connection for amplitude $\delta>0$ is called \emph{proper} if it has nonzero threshold. This means there is $\delta' \in (0, \delta)$ such that no excitable connection exists for amplitude $\delta'$. Since we have shown in the proof of \Cref{thm:simplex_simplex} that excitable connections $[\Net_j \to \Net_k]_\delta$ exist for any amplitude $\delta>0$ when prescribed by the digraph $\GG$, these connections have threshold zero and are thus not properly excitable.

In~\Cref{subsec:ex-het}, we mentioned an example for excitable connections with threshold zero that is also shown in~\cite{Ashwin.2016}, see~\Cref{fig:depth2}: There is a trajectory converging in backwards time to a heteroclinic cycle $[\xi_1 \to \xi_2 \to \xi_3 \to \xi_1]$ of length three, and in forward time to another equilibrium $\xi$. That is, for all $\delta>0$ it passes through every neighborhood $B_\delta(\xi_j)$, for $j \in \{1,2,3 \}$. Therefore, this trajectory constitutes an excitable connection of threshold zero between $\xi_j$ and $\xi$ for $j \in \{1,2,3 \}$. An alternative characterization of this situation is to say that there is a heteroclinic connection of depth two from the cycle to the equilibrium $\xi$: Its $\omega$-limit is $\{\xi\}$, but its $\alpha$-limit consists of the entire cycle, and thus contains not only equilibria, but also the heteroclinic connections between them.

In our construction, we showed that the $\alpha$-limit sets of the excitable connections are empty since all respective trajectories have unbounded components in backwards time. Therefore, they are not heteroclinic of any depth. This distinguishes our connections from those in \cite{Ashwin.2016}.

%%%
\subsection{Modification of time-scales}
\label{subsec:modification}
When it comes to practically observing trajectories that display the hierarchical transition structure according to a given collection of digraphs---e.g. in numerical simulations---, one has to mitigate effects on different and partially conflicting timescales, which depend on the stability properties of the network. Heteroclinic and excitable connections have the property of \emph{exponentially decaying remaining times}. That is, a trajectory initialized near an attracting heteroclinic/excitable connection structure spends longer and longer times near each individual equilibrium or invariant subset. The precise times and, thus, the sequence of transitions depends on the initial condition and the parameter values $a_{jk}, \alpha^j_{ik}$ in system \eqref{eq:simplex_simplex}. For some choices, it may happen that certain transitions dictated by the collection of digraphs are not observed, even though the connections exist in phase space. For example, the transitions of the superstructure may happen too fast for any of the lower hierarchical transitions to take place.

To mitigate such effects, we add additional parameters into the system, which can be used to speed up or slow down certain parts of its dynamics. Consider the adapted system
\begin{subequations}
	\label{eq:simplex_simplex_adapted}
	\begin{align}
		\label{eq:simplex_simplex_adapted_a}
		\dot X_j	&= \Phi\cdot X_j \left(1-\|X\|^2+\sum_{k=1}^N a_{jk}X_{k}^2 \right) \\[5pt]
		\label{eq:simplex_simplex_adapted_b}
		\dot x^j_i	&= x^j_i \left(\Psi\cdot\left( 1 - \|x^j\|^2 + \sum_{k=1}^{n_j} \alpha^j_{ik}(x^j_k)^2 \right) b^j_\varepsilon(X) - \Omega\cdot(1-b^j_\varepsilon(X)) \right).
	\end{align}
\end{subequations}
This system is the same as the original realization \eqref{eq:simplex_simplex} except for the parameters $\Phi, \Psi, \Omega >0$---more precisely, for $\Phi=\Psi=\Omega=1$, both systems are identical. The additional parameters play the following roles:
\begin{itemize}
    \item $\Phi$ adapts the velocity of the superstructure dynamics,
    \item $\Psi$ adapts the velocity of the dynamics of the substructure dynamics, when they are active according to the superstructure dynamics,
    \item $\Omega$ adapts the velocity of the decay to $0$ of the substructure variables, when the respective substructure is not active according to the superstructure dynamics.
\end{itemize}
In comparison to system \eqref{eq:simplex_simplex}, a parameter $\Lambda\in\{\Phi,\Psi,\Omega\}$ thus speeds up its respective component of the overall dynamics if $\Lambda>1$ and slows it down if $\Lambda<1$. For additional fine-tuning further parameters could be introduced, for example, by making $\Psi$ and $\Omega$ dependent on $j$, i.e., allow them to differ across the individual substructures.

Finally, we point out that none of these changes alter the validity of our existence result in~\Cref{thm:simplex_simplex}. In our proof, the existence of the invariant sets and the equilibria depends on the zeros of functions which are now multiplied by one of the parameters $\Lambda\in\{\Phi,\Psi,\Omega\}$. The existence of the connections further depends on signs of eigenvalues of these functions. None of these aspects change, when the entire function is multiplied with a positive parameter. Thus, the adapted system~\eqref{eq:simplex_simplex_adapted} realizes the same hierarchical collection of digraphs as \eqref{eq:simplex_simplex}, when the parameters $a_{jk}, \alpha^j_{ik}, \varepsilon$ are chosen as in~\Cref{thm:simplex_simplex}.

%%%%
\section{Examples and simulations}
\label{sec:numerics}
In this section, we use the simplex-simplex method to realize two collections of digraphs as excitable networks. In both cases, we choose suitable parameters and simulate the trajectory of an exemplary initial condition near the excitable network. These follow the excitable networks and reveal the desired topological structure, indicating that our construction yields invariant objects which bear some stability. This aspect is still an open challenge for simplex method realizations, and we do not investigate it further here.

The structures we have chosen are simple but non-trivial examples of hierarchical collections of digraphs, which can be realized by our method. Their building blocks are digraphs consisting of three or four vertices. In both collections, a single non-cyclic digraph appears, see~\Cref{fig:kirk-silber}. It corresponds to the well-studied Kirk-Silber network~\cite{Kirk_1994}. We have chosen this digraph as an element of both constructions because it is a basic example of a digraph containing two cycles. It has been shown that the simplex realization of this digraph as a heteroclinic network exhibits a weak form of \emph{switching dynamics}: For suitable parameter values, there are trajectories which follow the upper cycle for a certain amount of time before making a switch to the lower cycle which they then follow for all future times. This has been observed in~\cite{Kirk_1994} already and investigated in more detail in~\cite{Lohse_Diss}. We will see below that our method correctly reproduces this behavior.
The precise details of the respective constructions for both examples are also discussed below.

\begin{figure}
    \centering
    \includegraphics[width=0.2\linewidth,trim={41mm 55mm 41mm 0},clip]{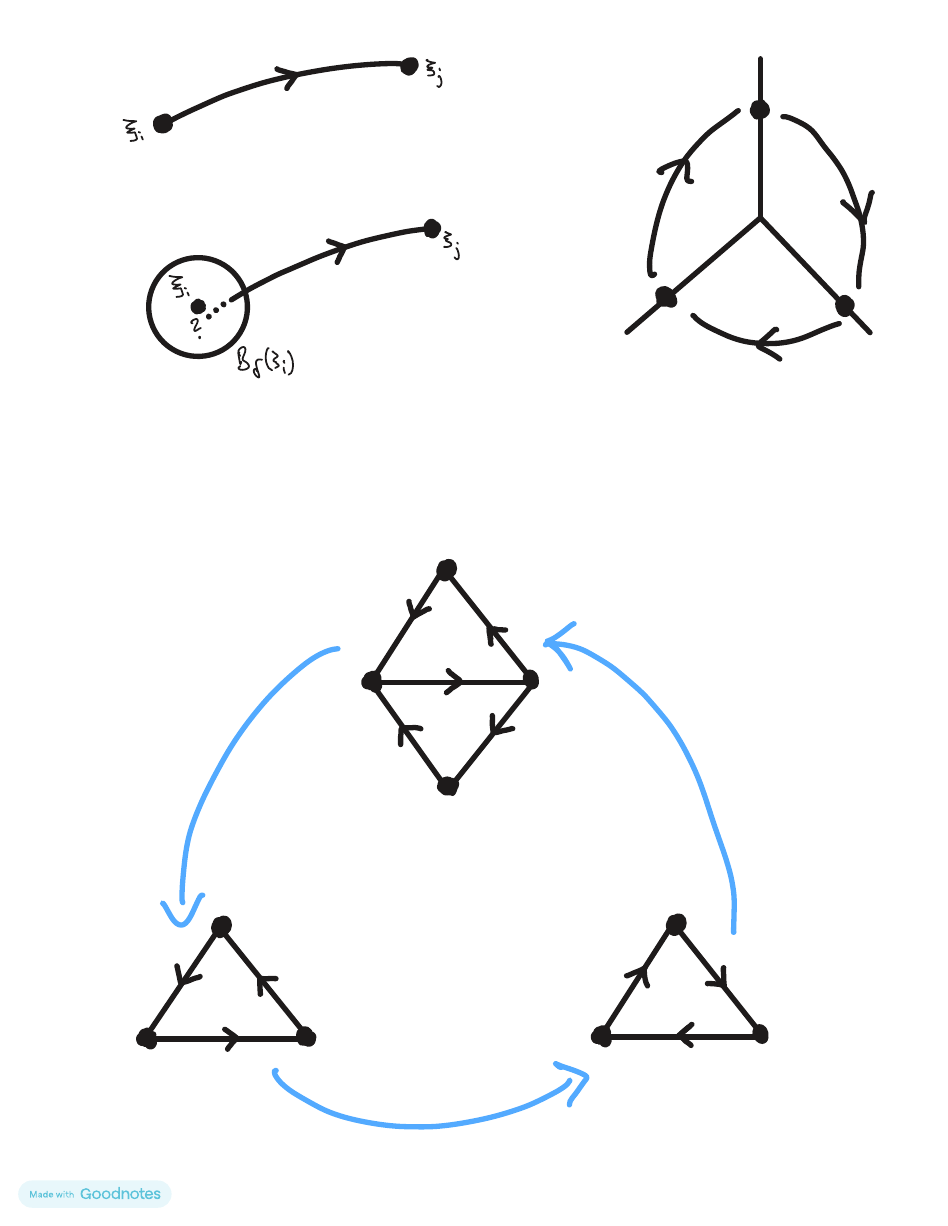}
    \caption{Digraph on four vertices corresponding to the Kirk-Silber network.}
    \label{fig:kirk-silber}
\end{figure}

%%%
\subsection{Switching substructure}
\label{subsec:cyclic-super}
\begin{figure}
    \centering
    \includegraphics[width=0.4\linewidth]{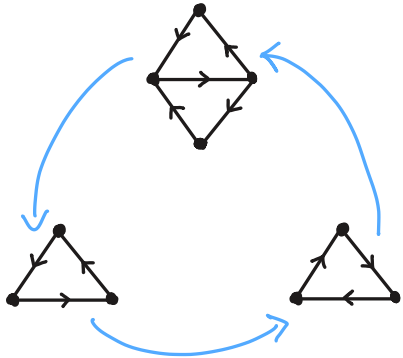}
    \caption{Sketch of a hierarchical collection of digraphs. The superstructure and two of the substructures correspond to $3$-cycles, while the third substructure corresponds to the Kirk-Silber network.}
    \label{fig:cyclic-super}
\end{figure}
The first hierarchical structure we consider is sketched in \Cref{fig:cyclic-super}: The superstructure digraph $\GG$ is a directed $3$-cycle, while the three substructures are two $3$-cycles $\G_1, \G_2$ and a Kirk-Silber network $\G_3$. Although there are no vertex labels in the sketch, it is indicated that the edges of both substructure $3$-cycles point in opposite directions. By choosing labels for all vertices for all four digraphs, we obtain adjacency matrices
\begin{equation*}
    A = \begin{pmatrix}
        0 & 1 & 0 \\
        0 & 0 & 1 \\
        1 & 0 & 0
    \end{pmatrix},\ 
    A^1 = \begin{pmatrix}
        0 & 1 & 0 \\
        0 & 0 & 1 \\
        1 & 0 & 0
    \end{pmatrix},\
    A^2 = \begin{pmatrix}
        0 & 0 & 1 \\
        1 & 0 & 0 \\
        0 & 1 & 0
    \end{pmatrix},\
    A^3 = \begin{pmatrix}
        0 & 1 & 0 & 0 \\
        0 & 0 & 1 & 1 \\
        1 & 0 & 0 & 0 \\
        1 & 0 & 0 & 0
    \end{pmatrix}.
\end{equation*}

For the realization \eqref{eq:simplex_simplex} (or \eqref{eq:simplex_simplex_adapted}), we choose system parameters in accordance with \eqref{eq:simplex_coeff}. These can conveniently be represented in matrix form as
\begin{align*}
    (a_{jk}) &= \begin{pmatrix}
        0 & 1 & -1.5 \\
        -1.5 & 0 & 1 \\
        1 & -1.5 & 0
    \end{pmatrix}, 
    &(\alpha^1_{ik}) &= \begin{pmatrix}
        0 & 1 & -1.1 \\
        -1.1 & 0 & 1 \\
        1 & -1.1 & 0
    \end{pmatrix} \\
    (\alpha^2_{ik}) &= \begin{pmatrix}
        0 & -1.1 & 1 \\
        1 & 0 & -1.1 \\
        -1.1 & 1 & 0
    \end{pmatrix}, 
    &(\alpha^3_{ik}) &= \begin{pmatrix}
        0 & 1 & -1.5 & -1.5 \\
        -1.5 & 0 & 0.5 & 2 \\
        1 & -1.5 & 0 & -1.5 \\
        1 & -1.5 & -1.5 & 0
    \end{pmatrix}
\end{align*}
Furthermore, we set
\[ \varepsilon=0.2<\frac{\sqrt{2}}{2}. \]
This choice of parameters fully determines system \eqref{eq:simplex_simplex} and guarantees that the hierarchical interaction structure in \Cref{fig:cyclic-super} is realized as an excitable network.

To simulate the system and observe the excitable interaction structure, we make use of the adaptation outlined in \Cref{subsec:modification}. In particular, we choose parameters
\[ \Phi = 0.1, \quad \Psi = 200, \quad \Omega = 0.05. \]
With these additional parameters, the adapted system \eqref{eq:simplex_simplex_adapted} is fully determined.

\begin{landscape}
\begin{figure}
    \centering
    \includegraphics[width=\linewidth, trim={5cm .5cm 4cm 1cm}, clip]{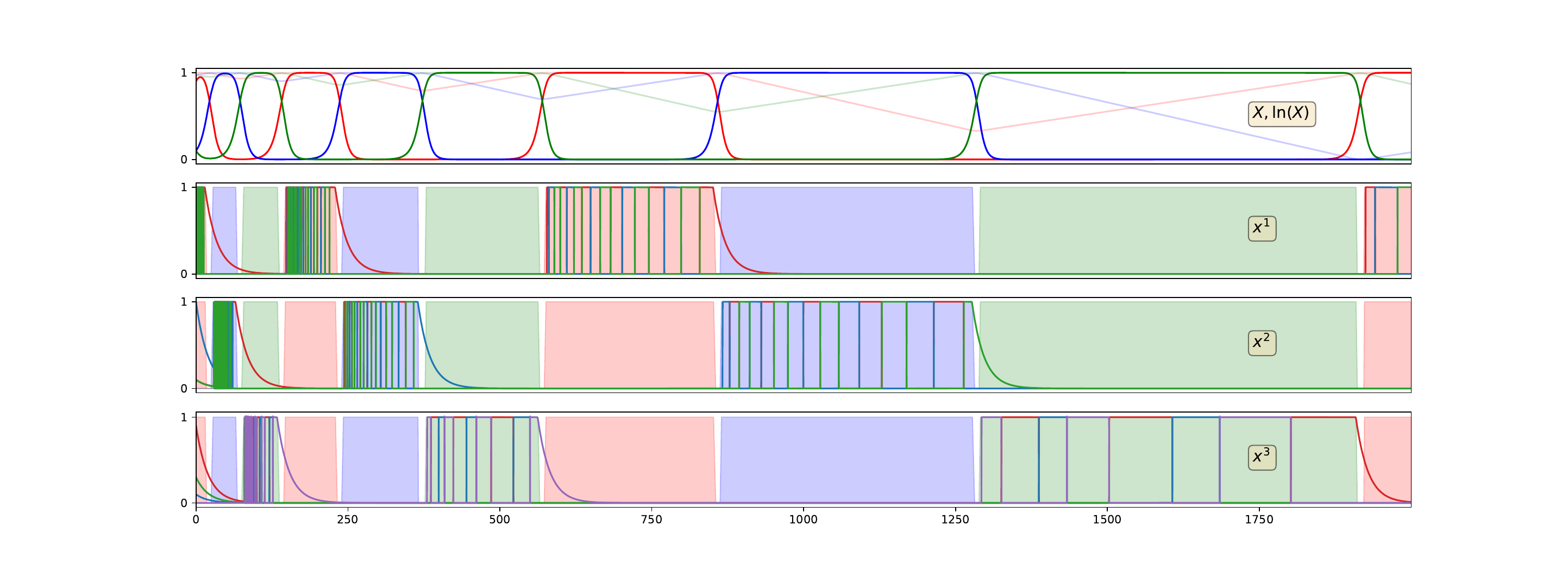}
    \caption{Timeseries of the realization of the digraph in \Cref{fig:cyclic-super}. The top panel shows the dynamics of the superstructure variables $X$. The three lower panels show the dynamics of the three substructure systems with coordinates $x^1, x^2, x^3$ respectively. The colored lines in all four panels correspond to the $3$, respectively $4$ components of the vectors and are color coded according to the legend in \Cref{fig:legend}. The colored overlay in the lower three panels indicates which of the subsystems is active, i.e., which equilibrium the $X$ component is close to. A full discussion including parameter values and interpretation can be found in \Cref{subsec:cyclic-super}.}
    \label{fig:cyclic-super-timeseries}
\end{figure}
\end{landscape}
\paragraph{Numerical simulation.}
We simulate system \eqref{eq:simplex_simplex_adapted} determined by the choices above for the initial condition
\begin{equation*}
    X = \begin{pmatrix}
        0.9 \\ 0.1 \\ 0.1
    \end{pmatrix}, \quad x^1 = \begin{pmatrix}
        0.999 \\0.1 \\ 0.1
    \end{pmatrix}, \quad x^2=\begin{pmatrix}
        0.1 \\ 0.999 \\0.1 
    \end{pmatrix}, \quad x^3=\begin{pmatrix}
        0.9 \\ 0.1 \\ 0.3 \\ 0.000001
    \end{pmatrix}.
\end{equation*}
The trajectory of this initial condition is computed until $t=2000$. A timeseries is shown in \Cref{fig:cyclic-super-timeseries}. The top panel contains the dynamics of the superstructure variables $X$ (more precisely, each colored line displays the evolution of one of its components, color coded as in \Cref{fig:legend}). We observe cyclic transitions
\begin{equation*}
    \begin{pmatrix}
    	X_1 \\ X_2 \\ X_3
    \end{pmatrix} \quad \approx \quad \begin{pmatrix}
        1 \\ 0 \\ 0
    \end{pmatrix} \to \begin{pmatrix}
        0 \\ 1 \\ 0
    \end{pmatrix} \to \begin{pmatrix}
        0 \\0 \\ 1
    \end{pmatrix} \to 
    \begin{pmatrix}
        1 \\ 0 \\ 0
    \end{pmatrix} \to \cdots
\end{equation*}
of the subsystem corresponding to the superstructure $\GG$. In fact, the trajectory does not reach any of these points, but gets closer and closer, spending increasingly longer times in their vicinity, as indicated by the shaded, logarithmically transformed timeseries.

The lower three panels of \Cref{fig:cyclic-super-timeseries} show the timeseries of the three subsystems in variables $x^1, x^2, x^3$ (which are $3$-, $3$-, and $4$-dimensional, respectively) using the color code in \Cref{fig:legend}. Here, we see that each of the subsystems is only ``active'' when the $X$ trajectory is near the corresponding equilibrium, otherwise all components in this subsystem decay to zero. \Cref{fig:cyclic-super-timeseries-zoom} shows zooms of the substructure timeseries in exemplary timeranges. In panel~(a), subsystem $1$ follows the cycle
\begin{equation*}
    \begin{pmatrix}
    	x^1_1 \\ x^1_2 \\ x^1_3
    \end{pmatrix} \quad \approx \quad \begin{pmatrix}
        1 \\ 0 \\ 0
    \end{pmatrix} \to \begin{pmatrix}
        0 \\ 1 \\ 0
    \end{pmatrix} \to \begin{pmatrix}
        0 \\0 \\ 1
    \end{pmatrix} \to 
    \begin{pmatrix}
        1 \\ 0 \\ 0
    \end{pmatrix} \to \dotsb
\end{equation*}
In panel~(b), subsystem $2$ follows the opposite cycle
\begin{equation*}
    \begin{pmatrix}
    	x^2_1 \\ x^2_2 \\ x^2_3
    \end{pmatrix} \quad \approx \quad \begin{pmatrix}
        1 \\ 0 \\ 0
    \end{pmatrix} \to \begin{pmatrix}
        0 \\ 0 \\ 1
    \end{pmatrix} \to \begin{pmatrix}
        0 \\1 \\ 0
    \end{pmatrix} \to 
    \begin{pmatrix}
        1 \\ 0 \\ 0
    \end{pmatrix} \to \dotsb
\end{equation*}
Finally, subsystem $3$ in panel~(c), when first active, displays the following transitions:
\begin{align*}
    \begin{pmatrix}
    	x^3_1 \\ x^3_2 \\ x^3_3 \\ x^3_4
    \end{pmatrix} \quad \approx \quad &\begin{pmatrix}
        1 \\ 0 \\ 0 \\ 0
    \end{pmatrix} \to 
    \begin{pmatrix}
        0 \\ 1 \\ 0 \\ 0
    \end{pmatrix} \to 
    \begin{pmatrix}
        0 \\ 0 \\ 1 \\ 0
    \end{pmatrix} \to 
    \begin{pmatrix}
        1 \\ 0 \\ 0 \\ 0
    \end{pmatrix} \\[5pt]
    &\to 
    \begin{pmatrix}
        0 \\ 1 \\ 0 \\ 0
    \end{pmatrix} \to 
    \begin{pmatrix}
        0 \\ 0 \\ 0 \\ 1
    \end{pmatrix} \to 
    \begin{pmatrix}
        1 \\ 0 \\ 0 \\ 0
    \end{pmatrix} \to
    \begin{pmatrix}
        0 \\ 1 \\ 0 \\ 0
    \end{pmatrix} \to 
    \begin{pmatrix}
        0 \\ 0 \\ 0 \\ 1
    \end{pmatrix} \to 
    \begin{pmatrix}
        1 \\ 0 \\ 0 \\ 0
    \end{pmatrix} \to\dotsb
\end{align*}
That is, when the third subsystem is active for the first time, it follows the upper cycle of the Kirk-Silber network once, before following the lower cycle (i.e., the one containing the equilibrium point $(0, 0, 0, 1)^T$ but not $(0,0,1,0)^T$). This transition occurs only in the first active time span. Afterwards, the $x^3$ components of the trajectory continue following the lower cycle exclusively.
Summarizing, our simulation displays the desired behavior that was proven to exist in \Cref{thm:simplex_simplex}: A driving superstructure reflecting $\GG$ and three subsystems that behave according to $\G_1,\G_2,\G_3$.

\begin{figure}
    \centering
	\begin{overpic}[width=\linewidth]{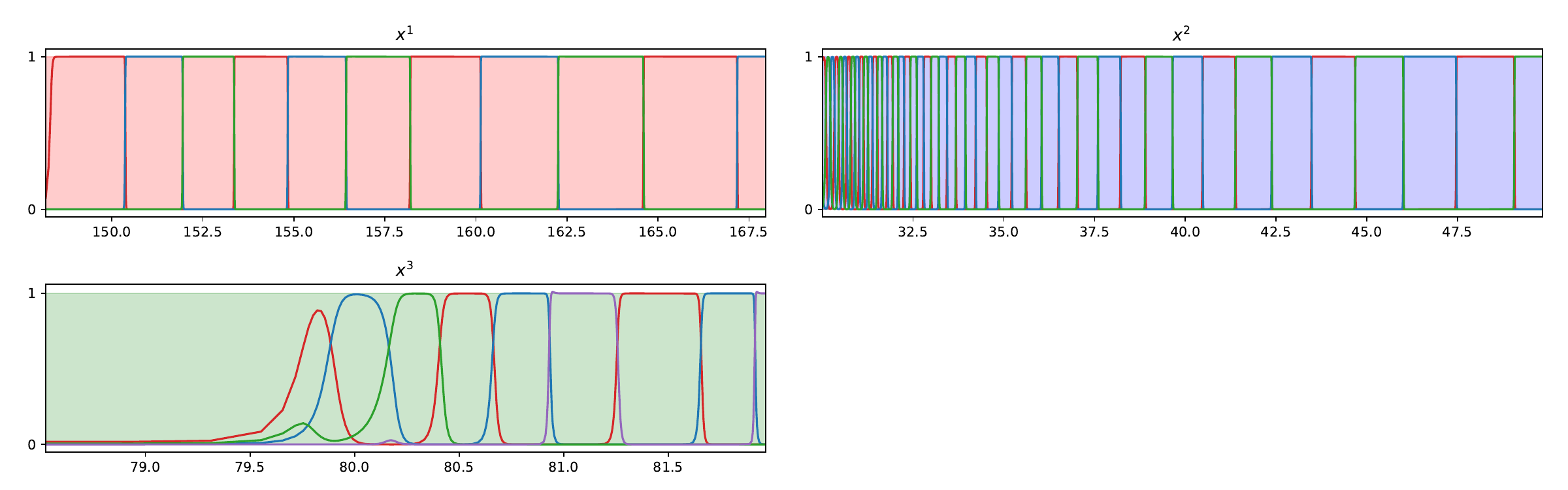}
        \put(2,29){(a)}
        \put(52,29){(b)}
        \put(2,14){(c)}
    \end{overpic}
    \caption{Zooming into substructure dynamics of \Cref{fig:cyclic-super-timeseries} for suitably chosen timeranges. (a) Subsystem $1$ displays cyclic behavior. (b) Subsystem $2$ displays cyclic behavior in the opposite direction. (c) Subsystem $3$ displays switching behavior according to the Kirk-Silber network.}
    \label{fig:cyclic-super-timeseries-zoom}
\end{figure}

%%%
\subsection{Switching superstructure}
\label{subsec:ks-super}
\begin{figure}
    \centering
    \includegraphics[width=0.4\linewidth]{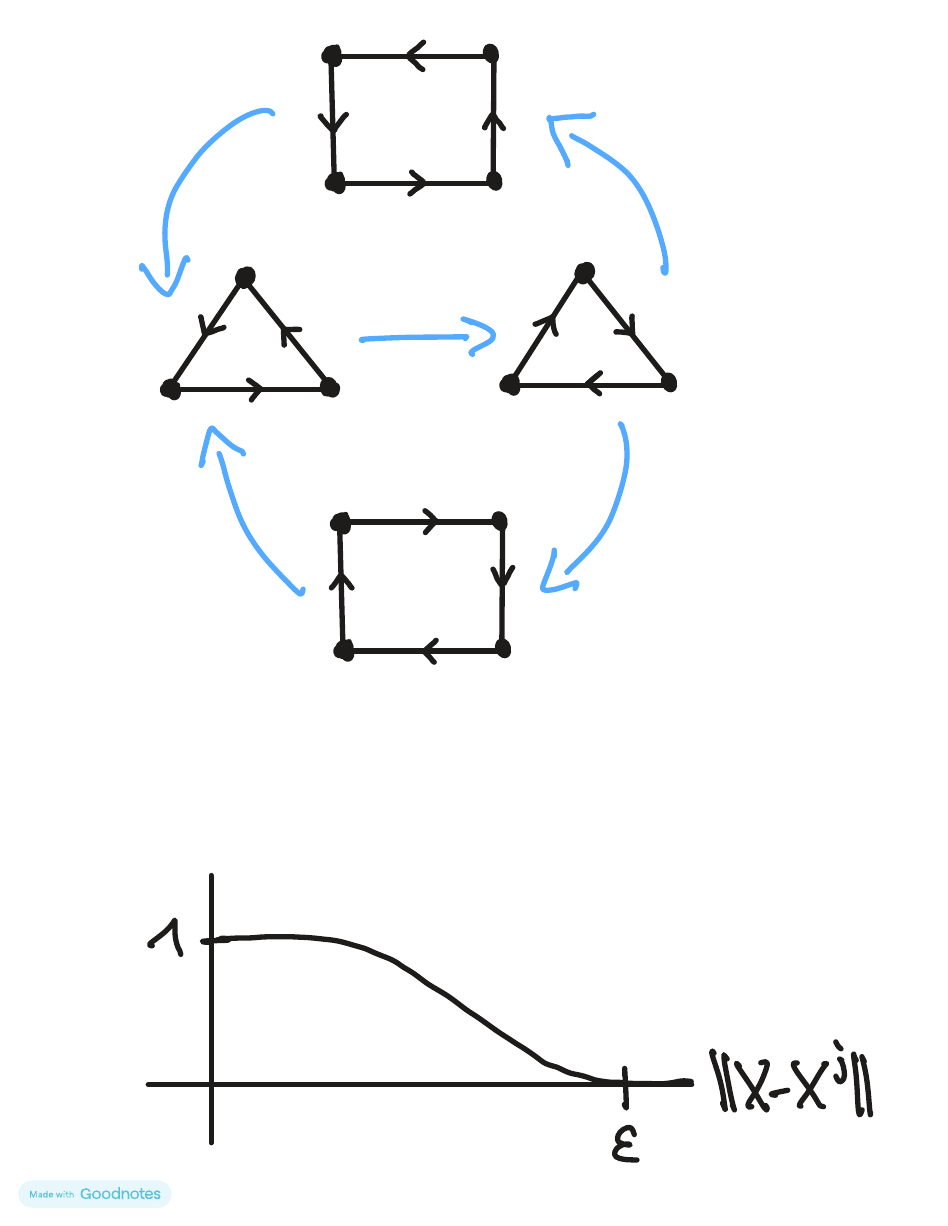}
    \caption{Sketch of a hierarchical collection of digraphs. The superstructure corresponds to the Kirk-Silber network, the substructures are $3$- and $4$-cycles in opposing directions.}
    \label{fig:ks-super}
\end{figure}
We now turn to the hierarchical structure sketched in \Cref{fig:ks-super}: The superstructure digraph $\GG$ is the Kirk-Silber network (four vertices), while the four substructures are two $3$-cycles $\G_1, \G_2$ and two $4$-cycles $\G_3, \G_4$ with connections in opposite directions, respectively. By choosing labels for the vertices in all four digraphs, we obtain adjacency matrices
\begin{align*}
    A &= \begin{pmatrix}
        0 & 1 & 0 & 0 \\
        0 & 0 & 1 & 1 \\
        1 & 0 & 0 & 0 \\
        1 & 0 & 0 & 0
    \end{pmatrix}, & & \\
    A^1 &= \begin{pmatrix}
        0 & 1 & 0 \\
        0 & 0 & 1 \\
        1 & 0 & 0
    \end{pmatrix},
    &A^2 &= \begin{pmatrix}
        0 & 0 & 1 \\
        1 & 0 & 0 \\
        0 & 1 & 0
    \end{pmatrix},\\
    A^3 &= \begin{pmatrix}
        0 & 1 & 0 & 0 \\
        0 & 0 & 1 & 0 \\
        0 & 0 & 0 & 1 \\
        1 & 0 & 0 & 0
    \end{pmatrix}
    &A^4 &= \begin{pmatrix}
        0 & 0 & 0 & 1 \\
        1 & 0 & 0 & 0 \\
        0 & 1 & 0 & 0 \\
        0 & 0 & 1 & 0
    \end{pmatrix}.
\end{align*}

\clearpage
For the realization \eqref{eq:simplex_simplex} (or \eqref{eq:simplex_simplex_adapted}), we choose system parameters in accordance with \eqref{eq:simplex_coeff}. These can conveniently be represented in matrix form as
\begin{align*}
    (a_{jk}) &= \begin{pmatrix}
        0 & 1 & -1.5 & -1.5 \\
        -1.5 & 0 & 0.5 & 2 \\
        1 & -1.5 & 0 & -1.5 \\
        1 & -1.5 & -1.5 & 0
    \end{pmatrix}, & & \\
    (\alpha^1_{ik}) &= \begin{pmatrix}
        0 & 1 & -1.1 \\
        -1.1 & 0 & 1 \\
        1 & -1.1 & 0
    \end{pmatrix}
    & (\alpha^2_{ik}) &= \begin{pmatrix}
        0 & -1.1 & 1 \\
        1 & 0 & -1.1 \\
        -1.1 & 1 & 0
    \end{pmatrix}, \\
    (\alpha^3_{ik}) &= \begin{pmatrix}
        0 & 1 & -1.01 & -1.1 \\
        -1.01 & 0 & 1 & -1.01 \\
        -1.01 & -1.01 & 0 & 1 \\
        1 & -1.01 & -1.01 & 0
    \end{pmatrix}, && \\
    (\alpha^4_{ik}) &= \begin{pmatrix}
        0 & -1.01 & -1.01 & 1 \\
        1 & 0 & -1.01 & -1.01 \\
        -1.01 & 1 & 0 & -1.01 \\
        -1.01 & -1.01 & 1 & 0
    \end{pmatrix}. &&
\end{align*}
To fully determine \eqref{eq:simplex_simplex}, we again set
\[ \varepsilon=0.2<\frac{\sqrt{2}}{2}. \]
This guarantees that the hierarchical interaction structure in \Cref{fig:ks-super} is realized as an excitable network.

As before, we choose timescale adaptation parameters
\[ \Phi = 0.1, \quad \Psi = 200, \quad \Omega = 0.05. \]
to simulate the system and observe the excitable interaction structure. This determines the adapted system \eqref{eq:simplex_simplex_adapted}.

\begin{landscape}
\begin{figure}
    \centering
    \includegraphics[width=\linewidth, trim={5cm .5cm 4cm 1cm}, clip]{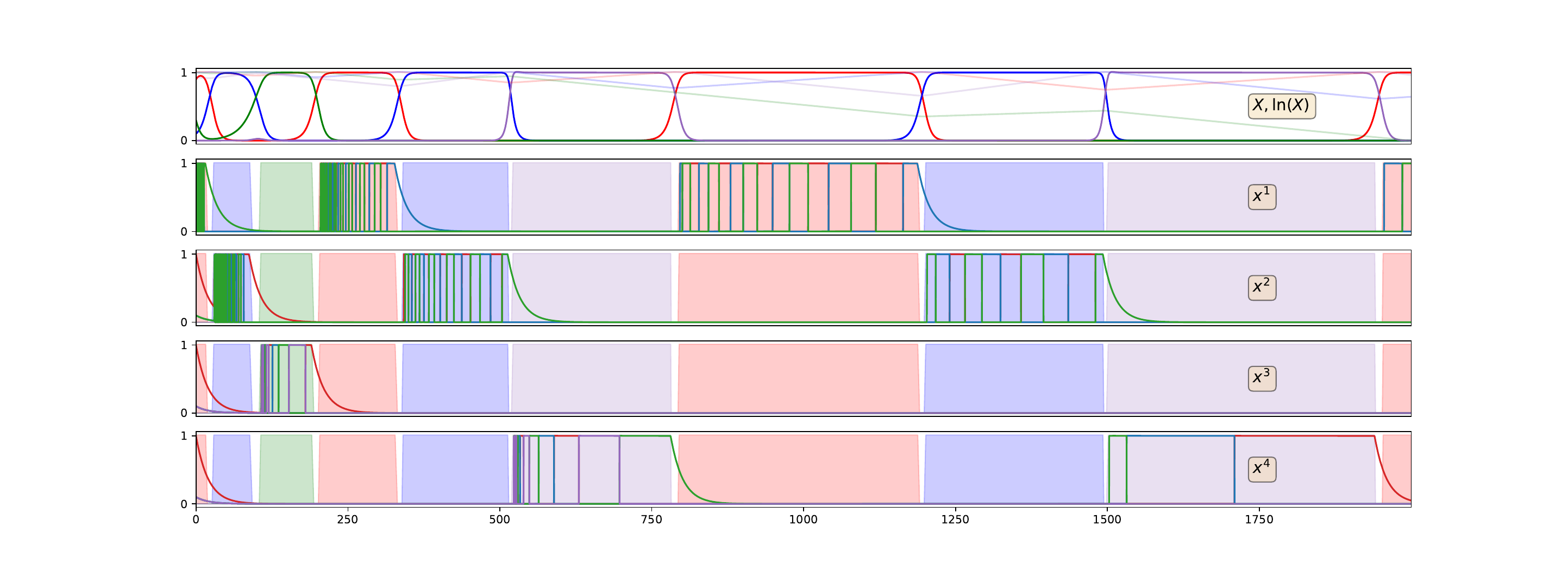}
    \caption{Timeseries of the realization of the digraph in \Cref{fig:ks-super}. The top panel shows the dynamics of the superstructure variables $X$. The four lower panels show the dynamics of the four substructure systems with coordinates $x^1, x^2, x^3, x^4$ respectively. The colored lines in all five panels correspond to the $3$, respectively $4$ components of the vectors and are color coded according to the legend in \Cref{fig:legend}. The colored overlay in the lower four panels indicates which of the subsystems is active, i.e., which equilibrium the $X$ component is close to. A full discussion including parameter values and interpretation can be found in \Cref{subsec:ks-super}.}
    \label{fig:ks-super-timeseries}
\end{figure}
\end{landscape}
\paragraph{Numerical simulation.}
We compute a trajectory for the initial condition
\begin{equation*}
    X = \begin{pmatrix}
        0.9 \\
        0.1 \\
        0.3 \\
        0.000001
    \end{pmatrix}, \quad x^1 = x^2 = \begin{pmatrix}
        0.999 \\
        0.1 \\
        0.1
    \end{pmatrix}, \quad x^3 = x^4 =\begin{pmatrix}
        0.999 \\
        0.1 \\
        0.1 \\
        0.1
    \end{pmatrix}.
\end{equation*}
\Cref{fig:ks-super-timeseries} contains the timeseries plot until $t=2000$. The top panel shows the dynamics of the superstructure variables $X$ (more precisely, each colored line displays the evolution of one of its components, color coded as in \Cref{fig:legend}). Similar to before, it displays the transition from the upper to the lower cycle according to the Kirk-Silber network
\begin{align*}
    \begin{pmatrix}
    	X_1 \\ X_2 \\ X_3 \\ X_4
    \end{pmatrix} \quad \approx \quad &\begin{pmatrix}
        1 \\ 0 \\ 0 \\ 0
    \end{pmatrix} \to 
    \begin{pmatrix}
        0 \\ 1 \\ 0 \\ 0
    \end{pmatrix} \to 
    \begin{pmatrix}
        0 \\ 0 \\ 1 \\ 0
    \end{pmatrix} \to 
    \begin{pmatrix}
        1 \\ 0 \\ 0 \\ 0
    \end{pmatrix} \\[5pt]
    &\to 
    \begin{pmatrix}
        0 \\ 1 \\ 0 \\ 0
    \end{pmatrix} \to 
    \begin{pmatrix}
        0 \\ 0 \\ 0 \\ 1
    \end{pmatrix} \to 
    \begin{pmatrix}
        1 \\ 0 \\ 0 \\ 0
    \end{pmatrix} \to
    \begin{pmatrix}
        0 \\ 1 \\ 0 \\ 0
    \end{pmatrix} \to 
    \begin{pmatrix}
        0 \\ 0 \\ 0 \\ 1
    \end{pmatrix} \to 
    \begin{pmatrix}
        1 \\ 0 \\ 0 \\ 0
    \end{pmatrix} \to\dotsb.
\end{align*}
Note again that the trajectory never actually reaches any of these points, but spends increasingly longer times in their vicinity every time it approaches them. As before, this can be seen in the shaded, logarithmically transformed timeseries.

The lower four panels of \Cref{fig:cyclic-super-timeseries} show the timeseries of the four subsystems in variables $x^1, x^2, x^3, x^4$ (which are $3$-, $3$-, $4$-, and $4$-dimensional, respectively) using the color code in \Cref{fig:legend}. Again, each of the subsystems is only ``active'', when the $X$ trajectory is near the corresponding equilibrium. When a subsystem is inactive, all its components decay to zero. Zooming into the subsystem panels (as is exemplified in \Cref{fig:ks-super-timeseries-zoom}) unveils details of the respective dynamics. One can see that subsystem $1$ in panel~(a) follows the cycle
\begin{equation*}
    \begin{pmatrix}
    	x^1_1 \\ x^1_2 \\ x^1_3
    \end{pmatrix} \quad \approx \quad \begin{pmatrix}
        1 \\ 0 \\ 0
    \end{pmatrix} \to \begin{pmatrix}
        0 \\ 1 \\ 0
    \end{pmatrix} \to \begin{pmatrix}
        0 \\0 \\ 1
    \end{pmatrix} \to 
    \begin{pmatrix}
        1 \\ 0 \\ 0
    \end{pmatrix} \to \dotsb.
\end{equation*}
Subsystem $2$ in panel~(b) follows the opposite cycle
\begin{equation*}
    \begin{pmatrix}
    	x^2_1 \\ x^2_2 \\ x^2_3
    \end{pmatrix} \quad \approx \quad \begin{pmatrix}
        1 \\ 0 \\ 0
    \end{pmatrix} \to \begin{pmatrix}
        0 \\ 0 \\ 1
    \end{pmatrix} \to \begin{pmatrix}
        0 \\1 \\ 0
    \end{pmatrix} \to 
    \begin{pmatrix}
        1 \\ 0 \\ 0
    \end{pmatrix} \to \dotsb.
\end{equation*}
Similarly, subsystem $3$ in panel~(c) follows the $4$-cycle
\begin{equation*}
    \begin{pmatrix}
    	x^3_1 \\ x^3_2 \\ x^3_3 \\ x^3_4
    \end{pmatrix} \quad \approx \quad \begin{pmatrix}
        1 \\ 0 \\ 0 \\ 0
    \end{pmatrix} \to \begin{pmatrix}
        0 \\ 1 \\ 0 \\ 0
    \end{pmatrix} \to \begin{pmatrix}
        0 \\ 0 \\ 1 \\ 0
    \end{pmatrix} \to  \begin{pmatrix}
        0 \\ 0 \\ 0 \\ 1
    \end{pmatrix} \to 
    \begin{pmatrix}
        1 \\ 0 \\ 0 \\ 0
    \end{pmatrix} \to \dotsb.
\end{equation*}
Subsystem $4$ in panel~(d) follows the same $4$-cycle in opposite direction.
\begin{equation*}
    \begin{pmatrix}
    	x^4_1 \\ x^4_2 \\ x^4_3 \\ x^4_4
    \end{pmatrix} \quad \approx \quad \begin{pmatrix}
        1 \\ 0 \\ 0 \\ 0
    \end{pmatrix} \to  \begin{pmatrix}
        0 \\ 0 \\ 0 \\ 1
    \end{pmatrix} \to  \begin{pmatrix}
        0 \\ 0 \\ 1 \\ 0
    \end{pmatrix} \to \begin{pmatrix}
        0 \\ 1 \\ 0 \\ 0
    \end{pmatrix} \to
    \begin{pmatrix}
        1 \\ 0 \\ 0 \\ 0
    \end{pmatrix} \to \dotsb.
\end{equation*}
Summarizing, the simulation displays the desired behavior that was proven to exist in \Cref{thm:simplex_simplex}: A driving superstructure reflecting $\GG$ and four subsystems that behave according to $\G_1,\G_2,\G_3,\G_4$.

\begin{figure}
    \centering
    \begin{overpic}[width=\linewidth]{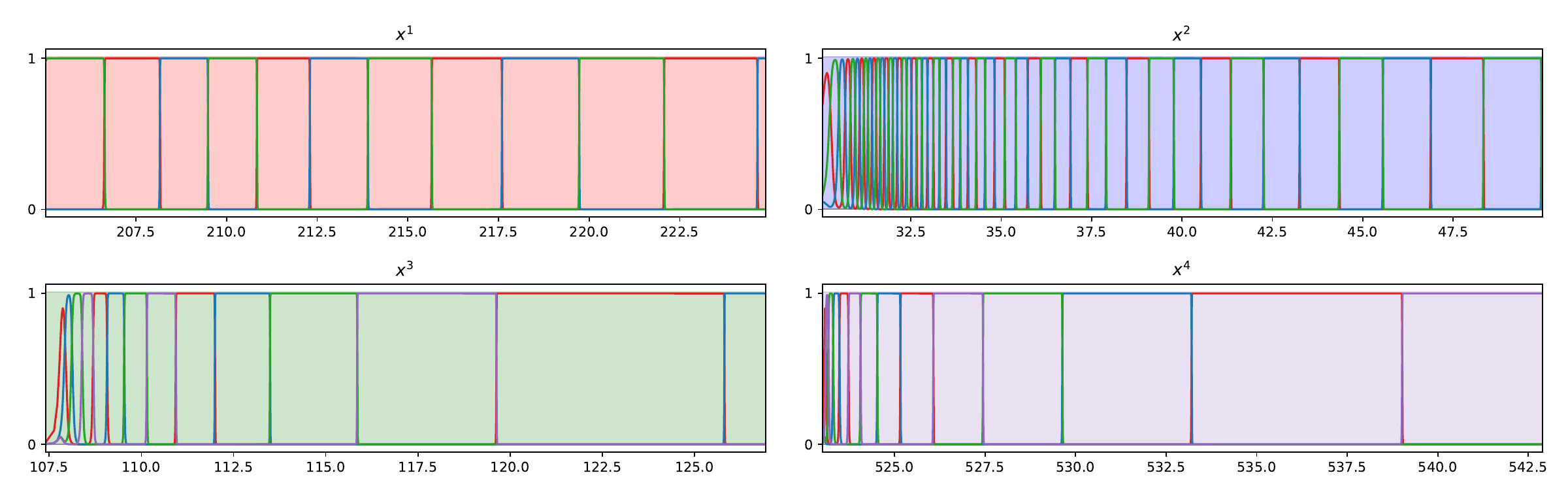}
    	\put(2,29){(a)}
    	\put(52,29){(b)}
    	\put(2,14){(c)}
        \put(52,14){(d)}
    \end{overpic}
    \caption{Zooming into substructure dynamics of \Cref{fig:ks-super-timeseries} for suitably chosen timeranges. (a) Subsystem $1$ displays cyclic behavior between three states. (b) Subsystem $2$ displays cyclic behavior between three states in the opposite direction. (c) Subsystem $3$ displays cyclic behavior between four states. (d) Subsystem $4$ displays cyclic behavior between four states in the opposite direction.}
    \label{fig:ks-super-timeseries-zoom}
\end{figure}

\begin{figure}
    \centering
    \includegraphics[width=0.25\linewidth]{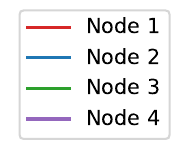}
    \caption{Legend of the color-coding for all subsystems in \Cref{fig:cyclic-super-timeseries,fig:cyclic-super-timeseries-zoom,fig:ks-super-timeseries,fig:ks-super-timeseries-zoom}.}
    \label{fig:legend}
\end{figure}

%%%
\subsection{Remarks about the numerics}
All of the numerical simulations discussed above have been performed in Python 3.12.1. The \texttt{numpy}-package is used for data storing. Crucially, trajectories are simulated via the \texttt{solve\_ivp}-routine from the \texttt{scipy}-package employing the RK45 (Runge-Kutta 45) method with tolerances \texttt{rtol=1e-12, atol=1e-12}. To avoid numerical instabilities, the simulation is performed in logarithmically transformed coordinates, which also requires the implementation of a suitably transformed vector field. For reproduction purposes, the code, including plotting commands, has been made available in an open repository \cite{vonderGrachtLohse.2026}. The simulations were run on a local personal computer and did not take more than a few minutes. After running the main script, the routines for simulation and timeseries plotting can be run from the console.

%%%%
\section{Discussion}
\label{sec:discussion}
%%%
\subsection{Summary and general comments}
In this article, we have presented a method for realizing a collection of directed graphs as a network in the phase space of a dynamical system, with excitable connections between different heteroclinic networks. As such, the dynamics exhibits transitions on two hierarchical levels. Our construction method is general in the sense that any $1$- and $2$-cycle free digraph can be realized on either hierarchical level. It is based on a suitable coupling of multiple realizations of individual digraphs according to the simplex method presented in \cite{AshPos2013}. We further discussed how different time-scales can be modified by adapting parameters, which is particularly significant for experimental observations of the desired transitions.

The hierarchical transition structures in our systems cannot only be regarded as dynamical transitions between realizations of \emph{different digraphs}.
Instead, they can be seen as a realization of dynamical transitions between \emph{different interaction structures on the same set of vertices}. In this interpretation, our construction yields a dynamical realization of a temporal network, i.e., a network with temporally changing interaction structure. 
We believe that this construction method can be a useful modeling tool for the description and investigation of complex systems with competing modulating mechanisms, such as competition scenarios in which competitors drop out of the game driven by some external mechanism.

The simplex method is a key element of our construction. In its original form, it generates heteroclinic connections between equilibria. Our generalization to invariant sets destroys the convergence in backwards time, as outlined above. Nonetheless, in forward time the dynamical behavior is identical to that of a heteroclinic network. Indeed, our numerical simulations suggest that the simulated trajectories behave in a way that is consistent with the presence of a depth two heteroclinic connection in forward time. We conjecture that this is the case for realizations of other collections of digraphs as well---the precise stability properties of the respective substructure realizations $\Net_j$ are crucial here. However, as already pointed out in \cite{AshPos2013}, no general results on stability and attraction of networks obtained through the simplex method are known.

%%%
\subsection{Further construction methods}
Our realization method suggests several angles of generalization and adaptation. In particular, the mechanism that relates super- and substructure realizations is reasonably flexible. Below we briefly and heuristically discuss some possibilities to replace parts of the construction to generalize our results.

\paragraph{Employing different realization methods.}
Our method uses the simplex method from~\cite{AshPos2013} for realizing both the super- and the substructure. Other realization methods exist that could be coupled in a similar way to obtain different hierarchical networks. For example, also in~~\cite{AshPos2013}, the authors propose the \emph{cylinder method} in which all equilibria are located on a single coordinate axis and each connection occurs within its own coordinate plane. This method can be used to replace one or both (su\-per-/sub\-structure) parts of our hierarchical realization. As long as the general structure of \eqref{eq:simplex_simplex}, consisting of a driving system and coupling modulated by suitable bump functions, is preserved, the global realization mechanism remains the same. That is, a very similar proof to the one of \Cref{thm:simplex_simplex} should show that the hierarchical structure can be realized as an excitable network with either or both of super- and substructure being realized by the cylinder method.

\paragraph{Hierarchical connections between other invariant sets.}
There are many cases in the literature where heteroclinic connections have been studied between invariant sets other than equilibria, such as periodic orbits and even chaotic sets, see e.g.~\cite{LohseRod2017} and~\cite{Ashwin.1998}, respectively. The mechanism underlying our hierarchical construction method is also not restricted to the case where the dynamics on the lower hierarchical level are heteroclinic networks between equilibria. Instead, the superstructure could modulate dynamical transitions between a collection of arbitrary invariant sets. To that end, one has to replace the non-trivial component of the convex combination in \eqref{eq:simplex_simplex_b} by a suitable equation that generates the desired invariant set. Likely, parameters would have to be carefully tuned to ensure that both the invariant substructures as well as the superstructure modulations will be observable in an actual realization.

\paragraph{More levels of hierarchy.}
In the same spirit as the previous paragraph, our construction idea can be generalized in an iterative manner: Multiple hierarchical realizations as in \eqref{eq:simplex_simplex}, each in their own respective coordinate space, could be considered as the invariant sets of a driving superstructure which are then coupled by a similar driver plus bump function construction as before. This realizes a three level hierarchical collection of digraphs as an excitable network of excitable networks of heteroclinic networks. This iterative process can in principle be generalized to an arbitrary number of hierarchy levels.

\paragraph{Adaptation to heteroclinic connections only.}
The main reason why the proposed realization method generates excitable connections with threshold $0$ instead of heteroclinic connections stems from the techniques in the substructure realization in \eqref{eq:simplex_simplex_b}. Therein, the second part of the convex combination controls the behavior of the substructure variables $x_i^j$ when the $j$-th substructure is inactive. It causes these variables to become unbounded in backwards time along trajectories which in forward time converge to $\Net_j$. If such behavior is undesired from a modeling perspective, this part of the convex combination can be modified as well. For example, replacing the $-x^j_j$ term by $x_i^j(x_i^j-1)$ causes the respective variable to be restricted to the compact interval $[0,1]$ for suitable initial conditions that guarantee convergence to $\Net_j$ in forward time. More precisely, the modified system
\begin{subequations}
	\begin{align}
	    \dot X_j &= X_j \left(1-\|X\|^2+\sum_{k=1}^N a_{jk}X_{k}^2 \right) \\[5pt]
	    \dot x^j_i &= x^j_i \left(\left( 1 - \|x^j\|^2 + \sum_{k=1}^{n_j} \alpha^j_{ik}(x^j_k)^2 \right) b^j_\varepsilon(X) - (1-b^j_\varepsilon(X)) \textcolor{red}{(1-x_i^j)} \right)
	\end{align}
\end{subequations}
(with all changes compared to \eqref{eq:simplex_simplex} highlighted in red) causes the substructure variables $x_i^j$ to decay to $0$ in forward time when $\Net_j$ is not active (as before), but to converge to $1$ in negative time when $\Net_j$ is not active. Thus, the connecting excitable trajectories discussed in the proof of \Cref{thm:simplex_simplex} do not have empty $\alpha$-limit sets. In particular, if we extend the definition of the invariant substructures $\Net_j$ to allow for $x_i^k\in\{0,1\}$ (instead of only $0$), these are dynamically invariant for the modified system and there are heteroclinic connections between them according to the superstructure $\GG$. This does, however, come at the cost of having the substructures consist of multiple copies of each individual network in different regions of phase space. Nonetheless, if this cost is accepted, one obtains proper heteroclinic connections.

%%%
\subsection{Outlook}
A key challenge in deriving our hierarchical construction method lies in the fact that the classical realization methods for digraphs as heteroclinic networks do not suggest a canonical way to replace equilibria by more general invariant sets. Instead, both the equilibria and the dynamical transitions depend crucially on the specific choice of coordinates and governing functions. Our method is therefore natural in the sense that, informally speaking, it replaces each coordinate axis by an individual subspace and each equilibrium by an invariant set (which is itself a digraph realization) in the respective subspace. This makes it an intuitive tool that should be applicable to various real-world hierarchical systems. 

However, the resulting system is potentially high-dimensional and some of the connections are excitable instead of heteroclinic. We believe that it is also possible to realize a hierarchical collection of digraphs in a lower-dimensional system with truly heteroclinic connections and without having multiple copies of the invariant sets as discussed above---at the cost of some restrictions in applicability. We will expand on this idea in a follow-up paper.

%%%%
\section*{Acknowledgement}
SvdG was partially funded by the Deutsche Forschungsgemeinschaft (DFG, German Research Foundation)—453112019.

%%%%
\printbibliography

\end{document}